\newtheorem{theorem}{Theorem}[section]
\newtheorem{proposition}[theorem]{Proposition}
\newtheorem{lemma}[theorem]{Lemma}
\newtheorem{corollary}[theorem]{Corollary}
\theoremstyle{definition}
\newtheorem{assumption}[theorem]{Assumption}
\theoremstyle{remark}
\newtheorem{remark}[theorem]{\bf Remark}
\numberwithin{equation}{section}
\newcommand{\A}{\mathcal{A}}
\newcommand{\B}{\mathfrak{B}}
\newcommand{\Q}{\mathcal{Q}}
\newcommand{\V}{\mathcal{V}}
\newcommand{\W}{\mathcal{W}}
\newcommand{\U}{\mathcal{U}}
\newcommand{\pd}{\partial}
\newcommand{\eps}{\varepsilon}
\newcommand{\id}{\mathds{1}}
\newcommand{\N}{\mathbb{N}}
\newcommand{\R}{\mathbb{R}}
\newcommand{\Sph}{\mathbb{S}}
\newcommand{\Z}{\mathbb{Z}}
\begin{document}

\title[Two-dimensional magnetic Schr\"odinger operators]
{Eigenvalue bounds for two-dimensional magnetic
Schr\"odinger operators}

\author {Hynek Kova\v{r}\'{\i}k}

\address {Hynek Kova\v{r}\'{\i}k, Dipartimento di Matematica, Politecnico di Torino}

\email {Hynek.Kovarik@polito.it}

%%%%%%%%%%%%%%%%%%%%%%%%%%%%%%%%%%%%%%

\date{\today}

\begin {abstract}
We prove that the number of negative eigenvalues of two-dimensional magnetic Schr\"o-dinger operators is bounded from above by the strength of the corresponding electric potential.  Such estimates fail in the absence of a magnetic field.
We also show how the corresponding upper bounds depend on the properties of the magnetic field and discuss their connection with Hardy-type inequalities.  
\end{abstract}

\maketitle

{\bf  AMS 2000 Mathematics Subject Classification:} 35P05, 35P15 \\

{\bf  Keywords:} Eigenvalue estimates, magnetic Schr\"odinger operator, Hardy inequalities

%%%%%%%%%%%%%%%%%%%%%%%%%%%%%%%%%%%%%%%%%%%%%%%%%%%%%%%%%%%%%%%%%%%%%%%%%%%

\section{\bf Introduction}

The Hamiltonian of a charged quantum particle in $\R^2$ interacting with
a magnetic field $B = \mbox{curl}\, A$ is given formally by the differential operator
\begin{equation} \label{ham-general}
H_B = ( i \nabla +A)^2 \qquad \text{in\, \, } L^2(\R^2).
\end{equation}
We will deal with spectral estimates for Schr\"odinger operators $H_B-V$, where $V$ is an additional electric potential. The well-known Cwikel-Lieb-Rosenblum inequality \cite{cw,lieb,ros} says that in dimension $d\geq 3$ the number $N(H_0-V,0)$ of negative eigenvalues of $H_0-V$ can be estimates as follows;
\begin{equation} \label{clr-classic}
N(H_0-V,0)=N(-\Delta-V,0) \, \leq \, C_d \int_{\R^d} V_+(x)^{d/2}\, dx, \qquad d\geq 3,
\end{equation}
where $V_+$ denotes the positive part of $V$ and $C_d$ is a constant independent of $V$. Moreover, in \cite{ahs} it is shown that inequality \eqref{clr-classic} holds, under certain generic assumptions,  with the same constant $C_d$ also in the presence of a magnetic field, i.e. with $-\Delta$ replaced by $H_B$. 

On the other hand, it is also known that \eqref{clr-classic} fails if $d=2$. This is clear already from the fact that the operator $-\Delta-V$ in dimension two has weakly coupled eigenvalues, in other words if $\int_{\R^2} V \geq 0$, then $N(-\Delta-\lambda V,0) \geq 1$ for any $\lambda >0$. There are also other, less obvious, reasons behind the failure of \eqref{clr-classic} for $d=2$, see section \ref{sect:discussion} for more details. 

When an additional magnetic field is introduced, then it is natural to expect that the situation described above might improve (in certain sense) due to diamagnetic effects. Indeed, it is known that  
magnetic Schr\"odinger operators typically do not have weakly coupled eigenvalues, \cite{timo}. Therefore 
we address the question whether it is possible to establish an analogue of the CLR-inequality \eqref{clr-classic} in dimension two for the counting function $N(H_B-V,0)$. This problem was solved in \cite{bel} in the case of the Aharonov-Bohm magnetic field represented by a Dirac delta function, see Remark \ref{rem:ab} below.

\smallskip

However, it is easily seen that as soon as a (radial) magnetic field is not of the Aharonov-Bohm type,  in other words when it is more regular, then the estimate proved in \cite{bel}, namely inequality \eqref{eq:bel} below,  must fail, see Proposition \ref{examples}. Our aim is thus to establish a suitable upper bound on $N(H_B-V,0)$ for a reasonably large class of magnetic fields and in particular to find out how such an upper bound depends on the properties of $B$. 

In the first part of the paper we prove a weighted version of \eqref{clr-classic} for general magnetic fields, 
see Theorems \ref{clr-mag-1} and \ref{clr-mag-2}.  The proofs of these theorems are based on a modification of the method of Lieb \cite{lieb,ahs,rs} and on certain Hardy type inequalities for the operator $H_B$ obtained in \cite{lw,timo}. The advantage of such approach is that is can be applied to a very large class of magnetic fields. Moreover, it also enables us to prove a family of weighted Sobolev inequalities for the operator $H_B$, see Corollary \ref{cor-sobolev}, which might be of independent interest. 
On the other hand, the upper bounds obtained by this method do not have the correct behavior in the strong coupling regime, cf. Remark \ref{rem:semiclassics}. 

Therefore, in the second part of the paper, we show that for radial magnetic fields with finite total flux one can establish sharper estimates on $N(H_B-V,0)$ with the expected strong coupling behavior, see Theorems \ref{clr-radial} and \ref{clr-radial-integer}. It is interesting to notice that the integral weights involved in these bounds change according to the value of the total flux of the magnetic field.  It turns out that this phenomenon is directly related to the decay rate of the weight functions of the respective 
Hardy-type inequalities for the operator $H_B$, see section \ref{sec:hardy-integer} for further details.

%%%%%%%%%%%%%%%%%%%%%%%%%%%%%%%%%%%%%%%%%%%%%%%%%%%%%%%%%%%%%%%%%%%%%%%%%%%%
\section{\bf Preliminaries and notation}

\noindent Given a self-adjoint operator $T$ on a Hilbert space $\mathcal{H}$, we denote by $N(T,\,  s)_{\mathcal{H}}$ the number of its discrete eigenvalues (counted with multiplicities) below $s\in\R$.
If $\mathcal{H}= L^2(\R^2)$, then we omit the subscript and write $N(T,\, s)$. For two functions $f_1,\, f_2$ on a set $\Omega$ we will
use the notation 
\begin{equation} \label{def:as}
f_1(x)  \, \simeq \,  f_2(x) \quad  \Leftrightarrow \quad  \exists\,  \, c>0 :\, \,  c^{-1}\, f_1(x) \, \leq \, 
f_2(x)\,  \leq \,  c\, f_1(x) \qquad  \forall \ x\in\Omega.
\end{equation}
An important characteristics of the magnetic field is its flux $\Phi(r)$ through the disc of radius $r$ centered in the origin:
\begin{equation} \label{flux}
\Phi(r) = \frac{1}{2\pi} \int_{\{x:\, |x|\leq r\}} B(x)\, dx. 
\end{equation}
We will denote the total flux of $B$ by 
$$
\Phi = \frac{1}{2\pi} \int_{\R^2} B(x)\, dx
$$ 
whenever the above integral is finite. Finally, we will use the notation $(\cdot\, , \cdot)_{\mathcal{H}}$ for the scalar product in a Hilbert space $\mathcal{H}$, and $(\cdot\, , \cdot)$ in the case $\mathcal{H}= L^2(\R^2)$.

%%%%%%%%%%%%%%%%%%%%%%%%%%%%%%%%%%%%%%%%%%%%%%%%%%%%%%%%%%%%%%%%%%%%%%%%%%%%%%%%%%%%%%%%%%%%%%%%%%

\section{\bf Main Results}

Since we are interested only in upper bounds on $N(H_B-V, 0)$, we may suppose without loss of generality that $V$ is non-negative. Moreover, we will always assume that $A\in L^2_{loc}(\R^2)$ and $V\in L^1_{loc}(\R^2)$. Under the symbol $H_B-V$ we will understand the Friedrichs extension of the operator generated by the quadratic form
\begin{equation} \label{quad-form}
\int_{\R^2} \big( |(i\, \nabla + A)\, u|^2- V\, |u|^2\big)\, dx, \qquad u\in C_0^\infty(\R^2),
\end{equation}
provided this form is bounded from below.

%%%%%%%%%%%%%%%%%%%%%%%%%%%%%

\smallskip

\subsection{Eigenvalue bounds for general magnetic fields}  

\begin{theorem} \label{clr-mag-1}
Assume that $A\in L^2_{loc}(\R^2)$ generates a non-zero magnetic field $B$. Let $0\leq V\in L^1_{loc}(\R^2)$ be such that the right hand side of \eqref{eq-clr-1} is finite for some $a>0$. Then the quadratic form \eqref{quad-form} is closable and there exists a constant $C=C(B,a)$, independent of $V$, such that 
\begin{equation} \label{eq-clr-1}
N(H_B-V, 0) \, \leq \, C\, \Big( \int_{\R^2} V(x)\, (1+|\log |x||)^{1+a} \, dx + \int_{\R^2} V(x)\, \log(1+V(x)) \, dx \Big).
\end{equation}
\end{theorem}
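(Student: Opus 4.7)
The plan is to adapt Lieb's functional-integral approach to the CLR inequality \cite{lieb,ahs,rs} to the 2D magnetic setting, using the Hardy-type inequality of \cite{lw,timo} as a substitute for the Sobolev embedding that fails in two dimensions. Under the hypothesis that $B \not\equiv 0$, those works supply a positive weight $W$ (of order $|x|^{-2}(1+\log^2|x|)^{-1}$ in model cases) such that $H_B \geq c W$ as quadratic forms. In particular $H_B$ is strictly positive in a weighted sense, so the form in \eqref{quad-form} is closable whenever the right-hand side of \eqref{eq-clr-1} is finite: one dominates $\int V |u|^2$ by combining the pointwise log weight with the Hardy weight.

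Once closability is in place, I would apply the Birman--Schwinger principle to reduce $N(H_B - V, 0)$ to counting the eigenvalues of the compact operator $V^{1/2} H_B^{-1} V^{1/2}$ above $1$, and bound this by $\tr F\!\left(V^{1/2} H_B^{-1} V^{1/2}\right)$ for a convex $F$ with $F \geq \chi_{[1,\infty)}$. Writing $H_B^{-1} = \int_0^\infty e^{-t H_B}\, dt$ and invoking the diamagnetic pointwise bound $|e^{-tH_B}(x,y)| \leq (4\pi t)^{-1} e^{-|x-y|^2/(4t)}$ from \cite{ahs}, together with a Jensen/convexity step applied to $F$, reduces this trace to a space-time integral of the schematic form
\begin{equation*}
\int_0^\infty \frac{dt}{t}\, \int_{\R^2} G\!\left(t\, V(x)\right)\, dx,
\end{equation*}
where $G$ is determined by $F$.

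It remains to control this integral in the two regimes of $t$. For small $t$ the standard Solomyak-type entropy estimate produces the $V\log(1+V)$ contribution. For large $t$ the diamagnetic bound alone is not integrable in two dimensions, which is where the Hardy inequality enters: splitting $H_B \geq \eta H_B + (1-\eta) c W$ for some $\eta\in(0,1)$ and extracting the pointwise weight $e^{-(1-\eta) t W(x)}$ restores integrability through
\begin{equation*}
\int_1^\infty \frac{e^{-\sigma t W(x)}}{t}\, dt \,\simeq\, 1 + |\log|x||,
\end{equation*}
and the additional $(1+|\log|x||)^a$ factor comes from a Young/Orlicz-type rebalancing needed to match the two regimes. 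The main obstacle is stitching the short-time and long-time estimates cleanly so that the constant depends on $B$ only through the Hardy constant of \cite{lw,timo}; keeping the dependence on the field in exactly this form is what allows the bound to apply to an arbitrary non-zero magnetic field.
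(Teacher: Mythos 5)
Your overall strategy --- Lieb's method, the diamagnetic inequality, and a magnetic Hardy inequality to tame the large-time behaviour of the heat kernel --- is the same as the paper's, and your treatment of the short-time regime (producing the $V\log(1+V)$ term from $\int_{1/V}^{t_0} t^{-2}(tV-1)_+\,dt$) is fine. The genuine gap is in the large-time regime: the step where you ``extract the pointwise weight $e^{-(1-\eta)tW(x)}$'' is not a valid operation. A form inequality $H_B\geq \eta H_B+\sigma W$ does not yield a diagonal kernel bound $e^{-t(\eta H_B+\sigma W)}(x,x)\leq e^{-\sigma t W(x)}(4\pi \eta t)^{-1}$. By Feynman--Kac, the diagonal kernel of $-\Delta+\sigma W$ equals $(4\pi t)^{-1}\,\mathbb{E}\bigl[\exp\bigl(-\sigma\int_0^t W(b_s)\,ds\bigr)\bigr]$ over Brownian bridges from $x$ to $x$; such a bridge spends most of its time at distance of order $\sqrt{t}$ from $x$, where a Hardy-type weight is of size $t^{-1}(\log t)^{-2}$ or smaller, so the occupation integral grows only like $\log t$ and the expectation decays only like a power of $\log t$ --- nothing remotely like $e^{-\sigma tW(x)}$. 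This is exactly why the paper does not take your shortcut: it replaces $H_B$ by $H_B+U_0$ with a compactly supported bump $U_0$ (obtained from the Hardy inequality $H_B\geq\gamma\chi_{\mathfrak{B}_1}$ of Weidl), dominates its semigroup diamagnetically by that of $\mathcal{A}_0=-\Delta+U_0$, and then proves the sharp kernel bound $k_0(t,x,x)\leq C\min\{t^{-1},\,(1+|\log|x||)^2\,t^{-1}(\log t)^{-2}\}$ via a ground-state transform and the Li--Yau estimates of Grigor'yan and Saloff-Coste. That lemma is the real technical content of the proof and is entirely missing from your argument. With the correct kernel bound, the large-time integral is handled by interpolating between the two branches of the minimum, $k_0(t,x,x)\leq c\,(1+|\log|x||)^{1+a}\,t^{-1}(\log t)^{-(1+a)}$, and integrating over $t\geq t_0(x)$; this is where the exponent $1+a$ in the weight $(1+|\log|x||)^{1+a}$ actually comes from, not from a Young/Orlicz rebalancing. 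Your version, if it worked, would produce the weight $1+|\log|x||$ with exponent $1$, i.e.\ a stronger statement than the theorem, which should itself have been a warning sign.

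A secondary point: closability of the form \eqref{quad-form} does not follow from the domination of $\int V|u|^2$ that you sketch --- finiteness of the right-hand side of \eqref{eq-clr-1} does not give relative form-boundedness of $V$ with respect to $H_B$. In the paper closability is obtained a posteriori: the eigenvalue bound shows that $H_B-V$ has only finitely many negative eigenvalues, hence the form is bounded from below and therefore closable.
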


\noindent For the next result we will need more hypotheses on the magnetic field. The following condition is taken from \cite{lw}.
\begin{assumption} \label{ass-field}
Assume that there exist  $\eps \in(0,1/2), A=A(\eps)$ and a finite or infinite number of open intervals $I_j=(\alpha_j, \beta_j)$, such that 
\begin{align*}
\{ r>0\, :\, \min_{k\in\Z} |k-\Phi(r)| < \eps\} & \subset \cup_{j=1}^N\, I_j ,\\
\beta_{j-1} & < \alpha_j < \beta_j, \quad j=1,\dots, N, \\
|I_j| & \leq A\, \min_{1\leq j\leq N}\, \{1+\alpha_j, \alpha_j-\beta_{j-1}, \alpha_{j+1} -\beta_j\}.
\end{align*} 
\end{assumption}

\begin{remark} \label{rem1}
Assumption \ref{ass-field} requires that the flux $\Phi(r)$ does not stabilize on integers in long intervals. It is satisfied, for example, if the total flux of the magnetic field is finite and non-integer.    
\end{remark}

\begin{theorem} \label{clr-mag-2}
Assume that $A\in L^2_{loc}(\R^2)$ generates a magnetic field $B$ which satisfies assumption \ref{ass-field}.  Let $0\leq V\in L^1_{loc}(\R^2)\cap L^{1+a}(\R^2, (1+|x|)^{2a}\, dx)$ for some $a>0$. Then the quadratic form \eqref{quad-form} is closable and there exists a constant $C(B,a)$, independent of $V$, such that 
\begin{equation} \label{eq-clr-2}
N(H_B-V, 0) \, \leq \, C(B,a) \int_{\R^2} V(x)^{1+a}\, (1+|x|)^{2a}\, dx.
\end{equation} 
\end{theorem}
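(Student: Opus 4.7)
My plan is to follow the heat-kernel approach of Lieb~\cite{lieb,rs}, modified so that the Hardy inequality guaranteed by Assumption~\ref{ass-field} compensates for the standard two-dimensional obstruction to CLR. The starting point is the Hardy inequality established in~\cite{lw}: under Assumption~\ref{ass-field}, there exists $c_B>0$ such that
\[
\int_{\R^2}\bigl|(i\nabla+A)u\bigr|^2\,dx\,\ge\, c_B \int_{\R^2}\frac{|u|^2}{1+|x|^2}\,dx,\qquad u\in C_0^\infty(\R^2).
\]
This is the key analytic input that is unavailable in the non-magnetic 2D case.

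Splitting $H_B=\theta H_B+(1-\theta)H_B$ for some $\theta\in(0,1)$ and applying the Hardy inequality to the second summand yields the form inequality
\[
H_B-V\,\ge\, \theta H_B - W_\theta,\qquad W_\theta\,:=\,\bigl(V-(1-\theta)c_B(1+|x|^2)^{-1}\bigr)_+,
\]
so that $N(H_B-V,0)\le N(\theta H_B-W_\theta,0)$. The virtue of this reduction is that on $\supp W_\theta$ one has the pointwise bound $V(x)(1+|x|)^2\gtrsim 1$, which ultimately produces the weight $(1+|x|)^{2a}$. By the Birman--Schwinger principle combined with the elementary estimate $n_+(K,1)\le\|K\|_{\mathfrak{S}^{1+a}}^{1+a}$, and the integral representation of $(\theta H_B)^{-1}$, the problem reduces to controlling
\[
\bigl\|W_\theta^{1/2}(\theta H_B)^{-1}W_\theta^{1/2}\bigr\|_{\mathfrak{S}^{1+a}}\le \int_0^\infty\bigl\|W_\theta^{1/2}e^{-t\theta H_B}W_\theta^{1/2}\bigr\|_{\mathfrak{S}^{1+a}}\,dt.
\]
The Araki--Lieb--Thirring inequality then gives
\[
\bigl\|W_\theta^{1/2}e^{-t\theta H_B}W_\theta^{1/2}\bigr\|_{\mathfrak{S}^{1+a}}^{1+a}\le\int_{\R^2}W_\theta(x)^{1+a}\,e^{-(1+a)t\theta H_B}(x,x)\,dx.
\]

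The core of the proof is a two-regime bound on the diagonal heat kernel. For small $t$, the diamagnetic estimate $e^{-sH_B}(x,x)\le(4\pi s)^{-1}$ furnishes the correct $L^{1+a}$-structure in $V$. For large $t$, the Hardy inequality must be re-injected (via a further splitting of $H_B$ and a Feynman--Kac / Golden--Thompson argument) to produce an effective factor $e^{-c_B s/(1+|x|^2)}$ that cuts off the $s$-integration on the scale $s\simeq(1+|x|)^2$. Performing first the integration in $s$ and then in $x$, and using the support inequality $V(1+|x|)^2\gtrsim 1$ on $\supp W_\theta$, the double integral collapses to $C\int_{\R^2}V(x)^{1+a}(1+|x|)^{2a}\,dx$, which is the announced bound. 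The main technical obstacle is precisely the large-$s$ estimate: the bare diamagnetic bound alone produces a logarithmically divergent $s$-integral---the very mechanism by which CLR fails in the non-magnetic 2D case---and one has to combine the pointwise diamagnetic bound with the Hardy inequality $H_B\ge c_B(1+|x|^2)^{-1}$ precisely enough so that the resulting decay produces the weight $(1+|x|)^{2a}$ and no more.
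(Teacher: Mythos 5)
Your opening moves match the paper's: the Hardy inequality $H_B\ge c_B U_1$ from \cite{lw} (with $U_1(x)\simeq(1+|x|^2)^{-1}$, cf.~\eqref{hardy-at} and \eqref{ubeta}), a convex splitting of $H_B$, the diamagnetic inequality, and a Lieb-type heat-kernel bound. But the step you yourself identify as "the main technical obstacle" is where the argument breaks. The effective factor $e^{-c_B s/(1+|x|^2)}$ you hope to extract for the diagonal heat kernel is \emph{false}: the operator $-\Delta+\beta^2|x|^{-2}$ (for $|x|>1$) has spectrum $[0,\infty)$ and its on-diagonal kernel decays only polynomially in $s$. Neither Golden--Thompson (a trace inequality, not a pointwise one) nor Feynman--Kac will produce exponential decay; a Brownian bridge of length $s$ based at $x$ accumulates only $\sim\beta^2\log(s/|x|^2)$ of the inverse-square potential, which exponentiates to the \emph{polynomial} gain $\bigl((1+|x|)^2/s\bigr)^{\beta}$. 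This is exactly the content of the paper's Lemma \ref{abeta}, namely $k_\beta(s,x,x)\le C\min\{s^{-1},(1+|x|)^{2\beta}s^{-1-\beta}\}$, whose proof (Allegretto--Piepenbrink ground state, the transform $-\Delta_\beta=h_\beta^{-1}\A_\beta h_\beta$, and Li--Yau bounds on the weighted manifold $(\R^2,h_\beta^2\,dx)$) is the real substance of the theorem and is entirely missing from your plan. Structurally, the paper keeps the Hardy weight as a \emph{positive potential} $U_a$ attached to the operator, precisely so that this improved kernel decay becomes available; by absorbing it into $W_\theta=(V-(1-\theta)c_B(1+|x|^2)^{-1})_+$ you retain only the support condition $V(1+|x|)^2\gtrsim1$, which is not enough.

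There is a second, quantitative gap in your reduction. Replacing the Rozenblum--Solomyak/Lieb formula
$N\le C\int_0^\infty t^{-1}\!\int k(t,x,x)\,(tV(x)-1)_+\,dx\,dt$ (which the paper uses, via \cite[Thm.2.5]{rs}) by
$n_+(K,1)\le\|K\|_{\mathfrak{S}^{1+a}}^{1+a}$ together with the triangle inequality
$\|W^{1/2}H^{-1}W^{1/2}\|_{\mathfrak{S}^{1+a}}\le\int_0^\infty\|W^{1/2}e^{-tH}W^{1/2}\|_{\mathfrak{S}^{1+a}}\,dt$
discards the cutoff $t>1/V(x)$. With the correct polynomial bound $k_\beta(t,x,x)\lesssim(1+|x|)^{2\beta}t^{-1-\beta}$, convergence of the $t$-integral at infinity then forces $(1+\beta)/(1+a)>1$, i.e.\ $\beta>a$ strictly, and you end up with the weight $(1+|x|)^{2\beta}$, which cannot be traded back down to $(1+|x|)^{2a}$ using only the lower bound $(1+|x|)^2\ge c/V$ on $\supp W_\theta$. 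The $(tV-1)_+$ cutoff is what allows the endpoint $\beta=a$ and produces exactly $\int V^{1+a}(1+|x|)^{2a}$. So both the key estimate and the counting device need to be replaced before this proposal becomes a proof.
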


\noindent As a consequence of Theorem \ref{clr-mag-2} we obtain 

\begin{corollary} \label{cor-sobolev}
Assume that $A\in L^2_{loc}(\R^2)$ generates a magnetic field $B$ which satisfies assumption \ref{ass-field}. Then for any $q\in [2,\infty)$ there exists a constant $S_q>0$ such that 
\begin{equation} \label{eq:sobolev}
\int_{\R^2}  |(i \nabla + A)\, u|^2\, dx \, \geq \, S_q \Big ( \int_{\R^2} |u(x)|^q\, (1+|x|)^{-2}\, dx\Big)^{\frac 2q}
\end{equation}
holds for all $u\in C_0^\infty(\R^2)$. In particular, if $|A| \in L^\infty(\R^2)$, then \eqref{eq:sobolev} holds for all $u\in H^1(\R^2)$.
\end{corollary}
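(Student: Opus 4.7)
The plan is to derive \eqref{eq:sobolev} from Theorem \ref{clr-mag-2} by the standard Lieb duality argument that converts CLR-type eigenvalue bounds into weighted Sobolev inequalities. The endpoint case $q=2$ is special and I would dispose of it separately: in that case \eqref{eq:sobolev} is precisely a magnetic Hardy-type inequality of the kind established in \cite{lw} under Assumption \ref{ass-field}, and since it is already one of the ingredients behind Theorem \ref{clr-mag-2} it can be invoked directly.

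For $q>2$, set $a=2/(q-2)\in(0,\infty)$, so that $q=2(1+a)/a$ matches the Hölder exponent induced by Theorem \ref{clr-mag-2}. Fix a non-trivial $u\in C_0^\infty(\R^2)$. For any non-negative $V\in L^1_{\loc}(\R^2)\cap L^{1+a}(\R^2,(1+|x|)^{2a}dx)$ satisfying $C(B,a)\int V^{1+a}(1+|x|)^{2a}\,dx<1$, Theorem \ref{clr-mag-2} forces $N(H_B-V,0)=0$, hence $H_B\geq V$ as quadratic forms on $C_0^\infty(\R^2)$; in particular
$$
\int_{\R^2}|(i\nabla+A)u|^2\,dx\;\geq\;\int_{\R^2} V\,|u|^2\,dx.
$$
Hölder's inequality with exponents $1+a$ and $(1+a)/a$, applied to the factorization $V|u|^2=[V(1+|x|)^{2a/(1+a)}]\cdot[|u|^2(1+|x|)^{-2a/(1+a)}]$, then yields
$$
\int V\,|u|^2\,dx\;\leq\;\Bigl(\int V^{1+a}(1+|x|)^{2a}\,dx\Bigr)^{\!\frac1{1+a}}\Bigl(\int |u|^q(1+|x|)^{-2}\,dx\Bigr)^{\!\frac{a}{1+a}},
$$
with equality (up to a positive scalar) for $V=|u|^{2/a}(1+|x|)^{-2}$, which is admissible because $u$ is bounded with compact support. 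Substituting this extremizer, normalized so that $C(B,a)\int V^{1+a}(1+|x|)^{2a}dx$ approaches $1$ from below, produces \eqref{eq:sobolev} with an explicit constant $S_q$ depending only on $C(B,a)$ and $a$.

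Finally, when $|A|\in L^\infty(\R^2)$ the magnetic quadratic form is equivalent on $C_0^\infty$ to the squared $H^1$-norm, so density of $C_0^\infty$ in $H^1(\R^2)$ together with Fatou's lemma on the right-hand side extends the inequality to all $u\in H^1(\R^2)$. I do not foresee any real obstacle: the only mildly delicate point is verifying that the Hölder-optimizer $V$ lies in the function class covered by Theorem \ref{clr-mag-2}, but this reduces to finiteness of $\int |u|^q(1+|x|)^{-2}dx$, which is automatic for test functions $u$.
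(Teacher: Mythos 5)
Your proposal is correct and follows essentially the same route as the paper: the Frank--Lieb--Seiringer duality argument applied to Theorem \ref{clr-mag-2} with $a=2/(q-2)$ and the trial potential $V=\mathrm{const}\cdot|u|^{q-2}(1+|x|)^{-2}$ (your H\"older extremizer $|u|^{2/a}(1+|x|)^{-2}$ is exactly this), reducing $q=2$ to the Hardy inequality \eqref{hardy-at} and extending to $H^1$ by density when $|A|$ is bounded. The only difference is presentational: you derive the optimal $V$ via H\"older's inequality, whereas the paper substitutes it directly.
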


\noindent  Inequality \eqref{eq:sobolev} fails, for any $q$, if the magnetic field is absent, cf.~Remark \ref{counter-ex}.

\smallskip

\begin{remark} ({\bf Semiclassical behavior}). \label{rem:semiclassics} 
Since a magnetic field does not affect the classical phase space volume, under certain generic decay conditions on $V$ the counting function $N(H_B+ \lambda\, V, 0)$ will obey the Weyl asymptotical formula
\begin{equation} \label{weyl}
\lim_{\lambda\to\infty} \lambda^{-1} N(H_B- \lambda\, V, 0) = \frac{1}{4\pi}\, \int_{\R^2} V(x)\, dx,
\end{equation}
see e.g.~\cite{sob}. On the other hand, introducing a coupling constant $\lambda$ in front of $V$ we easily see that when $\lambda\to\infty,$ then the right hand sides of  \eqref{eq-clr-1} and \eqref{eq-clr-2} are proportional to $\lambda\, \log\lambda$ and $\lambda^{1+a}$ respectively. In other words, they grow too fast with $\lambda$. This common defect of the bounds  \eqref{eq-clr-1} and \eqref{eq-clr-2} cannot be avoided within the approach used in their proofs. 

However, in the next section we will show that it can be removed, applying a different method, under the condition that the magnetic field is radial.
\end{remark}

%%%%%%%%%%%%%%%%%%%%%%%%%%%%%

\subsection{Eigenvalue bounds for radial magnetic fields}
\label{sect-radial}
For radial magnetic fields have stronger versions of Theorems \ref{clr-mag-1} and \ref{clr-mag-2} and . To state them we need some notation. We say that a potential function $V$ belongs to the class 
$L^1(\R_+,L^\infty(\Sph^1))$ if 
\begin{equation} \label{L1p}
\|V\|_{L^1(\R_+, L^\infty(\Sph^1))}  = \int_0^{\infty} \tilde V(r)\,  r\, dr \, < \infty \, ,
\end{equation}
where 
\begin{equation}
\tilde V(r) : = \text{ess} \sup_{0\leq \theta\leq 2\pi} |V(r,\theta)|.
\end{equation}
Moreover, given $s>0$ we denote $\B_s:=\{x\in \R^2\, :\, |x| < s \}$. 

\smallskip

\begin{assumption} \label{ass-field-radial}
Let $B\in  L^1(\R_+, (1+r)\, dr)$ be real-valued function and assume that $B(x)=B(|x|)$. 
\end{assumption}

\begin{theorem} \label{clr-radial}
Let $B$ satisfy Assumption \ref{ass-field-radial}.  Assume that $\Phi\notin\Z$. Suppose moreover that $V\in L^1_{loc}(\R^2, |\log |x||\, dx)$ and that
$V\in L^1(\R_+,L^\infty(\Sph^1))$. Then the quadratic form \eqref{quad-form} is closable and there exists a constant $C_1=C_1(B)$ , independent of $V$, such that
\begin{equation} \label{clr-eq-radial}
N(H_B-V, 0) \, \leq \, C_1\, \big( \|V \log |x| \|_{L^1(\B_1)}\, + \big \|V\big \|_{L^1(\R_+, L^\infty(\Sph^1))} \big) .
\end{equation}

\smallskip

\noindent In particular, if $V(x) =V(|x|)$, then 
\smallskip
\begin{equation} \label{clr-eq-radial-2}
N(H_B-V, 0) \, \leq \, C_1\, \big( \,  \|V\log |x| \|_{L^1(\B_1)} + \|V \|_{L^1(\R^2 )}\big).
\end{equation} 
\end{theorem}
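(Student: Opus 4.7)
The argument proceeds in three steps: domination by the radial majorant, angular Fourier decomposition, and channel-by-channel one-dimensional eigenvalue counting. Since $V \geq 0$ and $V(x) \leq \tilde V(|x|)$ pointwise, the min--max principle gives $N(H_B-V,0) \leq N(H_B-\tilde V,0)$, so it suffices to prove a bound of the form \eqref{clr-eq-radial-2} with $\tilde V$ in place of $V$; the inequality \eqref{clr-eq-radial} then follows since $\|V\|_{L^1(\R_+,L^\infty(\Sph^1))} = (2\pi)^{-1}\|\tilde V\|_{L^1(\R^2)}$. Hence I may assume $V$ is radial. I fix the transversal gauge $A(x) = \Phi(|x|)|x|^{-2}(-x_2,x_1)$ so that $H_B$ commutes with rotations, decompose $L^2(\R^2) = \bigoplus_{m \in \Z} \h_m$ into angular Fourier modes $e^{im\theta}$, and conjugate by the unitary $(Uu)(r) = r^{1/2} u(r)$ from $L^2(\R_+, r\,dr)$ to $L^2(\R_+, dr)$. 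Under this reduction $H_B - V$ decomposes as the orthogonal sum over $m \in \Z$ of the $1$D operators
\[
T_m - V, \qquad T_m = -\frac{d^2}{dr^2} + \frac{(m-\Phi(r))^2 - 1/4}{r^2},
\]
on $L^2(\R_+, dr)$ with Dirichlet condition at $r=0$, so $N(H_B-V,0) \le \sum_{m \in \Z} N(T_m - V, 0)_{L^2(\R_+)}$.

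Let $m_0 \in \Z$ be an integer closest to $\Phi$; since $\Phi \notin \Z$, $\delta_0 := \operatorname{dist}(\Phi, \Z) > 0$. I classify the channels as \emph{non-critical} ($m \notin \{0, m_0\}$), for which $\inf_{r \ge 0}|m - \Phi(r)|$ stays uniformly above $\tfrac12$ and grows with $|m|$, and \emph{critical} ($m \in \{0, m_0\}$), for which the centrifugal coefficient $(m - \Phi(r))^2$ degenerates at one endpoint (at $r = 0$ for $m = 0$ since $\Phi(0) = 0$, and at $r = \infty$ for $m = m_0$ since $\Phi(\infty) = \Phi$). In the non-critical case $T_m$ admits a pointwise Hardy bound $T_m \ge \gamma_m/r^2$ with $\gamma_m \to \infty$ as $|m| \to \infty$, and a Birman--Schwinger argument using the explicit Green function of $-d^2/dr^2 + \gamma_m/r^2$ yields
\[
N(T_m - V, 0) \le \frac{C}{\sqrt{\gamma_m}}\int_0^\infty r\,V(r)\,dr.
\]
Since $N(T_m - V, 0)$ is integer valued, only finitely many non-critical $m$ give a nonzero contribution, and the total is bounded by $C(B) \int_0^\infty r V(r)\,dr = C(B)\|V\|_{L^1(\R_+, L^\infty(\Sph^1))}$.

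For the two critical channels I invoke the magnetic Hardy inequality of Laptev--Weidl \cite{lw} and Weidl \cite{timo}, which, under $\Phi \notin \Z$, supplies a positive weight of the form $\rho(r) \asymp (r^2(1+|\log r|)^2)^{-1}$ for $h_m$ near the degenerate endpoint. A Bargmann-type estimate for $T_m - V$ with this logarithmic Hardy weight bounds the critical channels' contribution by
\[
C_1(B)\int_0^\infty r\,(1 + |\log r|)\,V(r)\,dr.
\]
Splitting the integration at $r = 1$, the part over $(0,1)$ yields the $\|V \log|x|\|_{L^1(\B_1)}$ term. The part over $(1,\infty)$ is only relevant for $m = m_0$, where a gauge shift $m \mapsto m - m_0$ recasts the tail as a fractional-flux problem with flux $\Phi - m_0 \in (-\tfrac12, \tfrac12) \setminus \{0\}$; for this setup a direct Bargmann argument based on the explicit zero-energy solutions $r^{1/2 \pm |\Phi-m_0|}$ gives a bound with the flat weight $r\,dr$, eliminating the unwanted logarithm at infinity. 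Combining these contributions proves the theorem; closability of \eqref{quad-form} follows from the same magnetic Hardy inequality together with the integrability assumptions on $V$.

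\medskip

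\noindent\textbf{Main obstacle.} The delicate point is the treatment of the $m_0$-channel at infinity. A naive use of the logarithmic Hardy inequality of \cite{lw,timo} would impose an unwanted $|\log|x||$-weight on $V$ at infinity, contradicting the clean $r\,dr$ weight in \eqref{clr-eq-radial-2}. Overcoming this requires a sharper Bargmann estimate for the subcritical $1$D Schr\"odinger operator $-d^2/dr^2 + (\nu^2-1/4)/r^2 - V$ with $\nu = |\Phi - m_0| \in (0, \tfrac12)$, which exploits the explicit zero-energy solutions $r^{1/2 \pm \nu}$ and crucially the strict positivity of $\nu$ guaranteed by the non-integrality of $\Phi$.
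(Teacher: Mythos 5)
Your overall architecture (angular Fourier decomposition, Birman--Schwinger/Bargmann estimates channel by channel, a logarithmic Hardy weight only at the origin for the critical channel and a clean $1/r^2$ weight elsewhere because $\Phi\notin\Z$) matches the paper's proof, which runs through Lemmas \ref{aux-1dim}, \ref{orthog}, \ref{no-log} and \ref{m=0}. However, your very first reduction contains a genuine gap that breaks the non-radial statement \eqref{clr-eq-radial}. Replacing $V$ by its angular supremum $\tilde V$ via $N(H_B-V,0)\le N(H_B-\tilde V,0)$ and then proving \eqref{clr-eq-radial-2} for $\tilde V$ produces the term $\|\tilde V\log|x|\|_{L^1(\B_1)}=2\pi\int_0^1\tilde V(r)|\log r|\,r\,dr$, and this quantity is \emph{not} controlled by the right-hand side of \eqref{clr-eq-radial}: the hypotheses only give $\int_{\B_1}V|\log|x||\,dx<\infty$ and $\int_0^\infty\tilde V(r)\,r\,dr<\infty$, and one can easily build $V$ supported in shrinking angular sectors (e.g.\ $\tilde V(r)=r^{-2}|\log r|^{-1}(\log|\log r|)^{-2}$ on a sector of opening $|\log r|^{-2}$) for which both hypotheses hold while $\int_0^1\tilde V(r)|\log r|\,r\,dr=\infty$. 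So your argument proves a strictly weaker theorem. The paper avoids this by not discarding the angular structure: it uses the operator inequality $H_B-V\ge\Pi_0(H_B-2V)\Pi_0+Q(H_B-2V)Q$, so that the critical $m=0$ channel sees only the angular \emph{average} $\hat V$ (whose weighted $L^1$ norms coincide with those of $V$, yielding exactly $\|V\log|x|\|_{L^1(\B_1)}$), while the orthogonal complement $Q$ is estimated against $\tilde V$ but with the log-free Hardy weight $\varkappa/|x|^2$ of Lemma \ref{no-log}, so no logarithm is ever attached to $\tilde V$. You need this averaging step (or an equivalent device) to recover \eqref{clr-eq-radial}.

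A secondary, fixable imprecision: your claim that for non-critical $m$ the quantity $\inf_{r\ge0}|m-\Phi(r)|$ stays uniformly above $\tfrac12$ is false whenever $|\Phi|>1$, since $\Phi(r)$ interpolates continuously between $0$ and $\Phi$ and passes through the intermediate integers; for those finitely many channels the pointwise bound $T_m\ge\gamma_m/r^2$ fails on an intermediate annulus. The paper repairs exactly this by ``extending the Hardy weight'' across the interval where $\Phi(r)$ is close to $-m$ (the cut-off construction in Lemmas \ref{hardy-log-lem} and \ref{no-log}); you should incorporate that step rather than rely on a uniform pointwise lower bound. Your treatment of the $m_0$ channel at infinity via the explicit zero-energy solutions $r^{1/2\pm\nu}$ with $\nu=|\Phi-m_0|>0$ is sound and is precisely the content of the paper's Lemma \ref{aux-1dim}.
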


\noindent If the total flux is an integer, then we have to replace the first term on the right hand side of \eqref{clr-eq-radial} by a corresponding $L^1-$norm of $V(x) \log (x)$ on the whole of $\R^2$:

\begin{theorem} \label{clr-radial-integer}
Let $B$ satisfy Assumption \ref{ass-field-radial}. Assume that $\Phi\in\Z$. Suppose moreover that $V\in L^1(\R^2, |\log |x||\, dx)$ and that
$V\in L^1(\R_+,L^\infty(\Sph^1))$. Then the quadratic form \eqref{quad-form} is closable
and there exists a constant $C_2=C_2(B)$, independent of $V$, such that
\begin{equation} \label{clr-eq-integer}
N(H_B-V, 0) \, \leq \, C_2 \big( \|V \log |x| \|_{L^1(\R^2)}\, +  \|V \|_{L^1(\R_+, L^\infty(\Sph^1))} \big).
\end{equation}

\smallskip

\noindent In particular, if $V(x) =V(|x|)$, then 
\smallskip
\begin{equation} \label{clr-eq-integer-2}
N(H_B-V, 0) \, \leq \, C_2 \big(  \|V\log |x| \|_{L^1(\R^2)} + \, \|V  \|_{L^1(\R^2 )} \big)
\end{equation} 
\end{theorem}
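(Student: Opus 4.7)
The plan is to closely mirror the proof of Theorem \ref{clr-radial} for the non-integer case, adapting it to accommodate the new critical angular-momentum channel at infinity that arises when $\Phi\in\Z$.

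In the radial gauge $A(x)=(\Phi(|x|)/|x|^2)(-x_2,x_1)$, writing $u(r,\theta)=\sum_{m\in\Z} u_m(r)\, e^{im\theta}/\sqrt{2\pi}$, the operator $H_B$ diagonalizes as the direct sum $\bigoplus_m h_m$, with
\[
h_m=-\frac{1}{r}\partial_r(r\partial_r)+\frac{(m-\Phi(r))^2}{r^2} \quad \text{on } L^2(\R_+,\, r\,dr).
\]
Because $\Phi(0^+)=0$ and $\Phi(\infty)=\Phi\in\Z$, the centrifugal weight $(m-\Phi(r))^2$ vanishes as $r\to 0$ precisely in the channel $m=0$ and as $r\to\infty$ precisely in the channel $m=\Phi$, while in every other channel it stays bounded below by a positive constant uniformly in $r$. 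These two (possibly coinciding, when $\Phi=0$) critical channels are responsible for the logarithmic weight in \eqref{clr-eq-integer}; contrast this with the non-integer case, where no channel is critical at infinity and the log weight appears only on $\ball_1$.

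For the critical channels I would invoke the logarithmic Hardy inequalities of Section \ref{sec:hardy-integer}, which in the integer-flux case yield $h_0\gtrsim (r^2(1+|\log r|)^2)^{-1}$ near the origin and $h_\Phi\gtrsim (r^2(1+|\log r|)^2)^{-1}$ near infinity. Combining these with a one-dimensional Bargmann/Birman--Schwinger bound on the half-line produces a contribution $\leq C\int_0^\infty V_0(r)(1+|\log r|)\, r\,dr$, where $V_0(r)=(2\pi)^{-1}\int_0^{2\pi} V(r,\theta)\,d\theta$ denotes the angular average; since $\|V_0\log|x|\|_{L^1(\R^2)}=\|V\log|x|\|_{L^1(\R^2)}$, this reproduces precisely the first term on the right-hand side of \eqref{clr-eq-integer}. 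For the remaining channels, the bound $V\leq \tilde V$ together with a summation of the resulting Bargmann estimates---or, equivalently, an operator-valued Hardy/CLR inequality of Laptev--Weidl type grouping them from the outset---produces the second term, of order $\|V\|_{L^1(\R_+,L^\infty(\Sph^1))}$. The off-diagonal (in angular momentum) pieces of $V$ that couple the critical channel to the rest satisfy $|V-V_0|\leq 2\tilde V$ and are thus absorbed into the non-critical contribution. The specialization $V(x)=V(|x|)$ collapses the two norms to those in \eqref{clr-eq-integer-2}, since for a radial $V$ one has $V_0=V=\tilde V$ and $\|V\|_{L^1(\R_+,L^\infty(\Sph^1))}=(2\pi)^{-1}\|V\|_{L^1(\R^2)}$.

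The principal obstacle is the new critical channel at infinity: one must establish the correct logarithmic Hardy inequality for $h_\Phi$ on $\{r\geq 1\}$ using the quantitative rate at which $\Phi(r)\to\Phi$, and then derive the corresponding one-dimensional CLR-type bound with weight $(1+\log r)$. A secondary obstacle is the careful separation of the angular-average contribution from the purely non-radial part of $V$, which is what allows $\|V\log|x|\|_{L^1(\R^2)}$---rather than its $\tilde V$-analogue---to appear in the final bound, as well as the summability of the Bargmann estimates across the infinitely many non-critical channels.
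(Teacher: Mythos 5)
Your architecture coincides with the paper's: decompose $H_B$ into angular momentum channels, treat the two critical channels ($m=0$, critical at the origin, and the channel that becomes critical at infinity when $\Phi\in\Z$) by combining a logarithmic Hardy weight with a one-dimensional Birman--Schwinger bound that carries the weight $1+|\log r|$ (this is exactly what Lemmata \ref{zeroflux} and \ref{flux-m} do, via Neumann bracketing on $(n,n+1)$ and Lemma \ref{aux-1dim} with $\delta_n\sim 1/\log(n+1)$), and control all the remaining channels by the uniform bound \eqref{eq:orthog} together with the Hardy inequality of Lemma \ref{no-log-integer}. One remark on your ``principal obstacle'': the global logarithmic Hardy inequality needed at infinity is Lemma \ref{hardy-log-lem}, which the paper obtains for an arbitrary non-zero field from the local inequality of Weidl plus a one-dimensional extension argument, without tracking the rate at which $\Phi(r)\to\Phi$; so this step is less delicate than you anticipate.

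The one step where your description, taken literally, would fail is the off-diagonal coupling. Writing $V=\hat V+(V-\hat V)$ and using the pointwise bound $|V-\hat V|\le 2\tilde V$ replaces $V$ by the radial potential $\hat V+2\tilde V$, and the $\tilde V$ contribution then re-enters the \emph{critical} channels, where the one-dimensional estimate attaches the weight $1+|\log r|$ to it; the resulting term $\int_0^\infty \tilde V(r)(1+|\log r|)\, r\, dr$ is not controlled by the right-hand side of \eqref{clr-eq-integer}, so the coupling is not in fact ``absorbed into the non-critical contribution''. The paper controls the coupling at the operator level instead: for a projection $P$ commuting with $H_B$ and $Q=\id-P$ one has $\pm(PVQ+QVP)\le PVP+QVQ$, which yields \eqref{variation} and \eqref{3-terms}; consequently the critical blocks only ever see $\Pi_j V\Pi_j$, i.e.~multiplication by the angular average $\hat V$, and this is precisely what lets $\|V\log|x|\|_{L^1(\R^2)}$, rather than its $\tilde V$-analogue, appear in the final bound. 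Since you announce that you will mirror the proof of Theorem \ref{clr-radial}, where exactly this projection inequality is used, the repair is at hand, but the mechanism you actually wrote down is not the one that works.
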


\smallskip

\noindent We note that, contrary to Theorems \ref{clr-mag-1} and \ref{clr-mag-2}, the upper bounds given in Theorems \ref{clr-radial} and \ref{clr-radial-integer} do respect the linear growth of $N(H_B+ \lambda\, V, 0)$ in $\lambda$ predicted by the Weyl formula \eqref{weyl}. Notice also that while in \eqref{clr-eq-radial} the logarithmic weight is only local, in \eqref{clr-eq-integer} it is included globally on the whole $\R^2$, which restricts the class of admissible potentials $V$. In the next section we will show that this restriction cannot be relaxed.

\begin{remark} ({\bf Aharonov-Bohm field}) \label{rem:ab} The vector potential 
\begin{equation} \label{ab-potential}
A(x) = \Phi\, \Big (-\frac{x_2}{|x|^2}\, ,\, \frac{x_1}{|x|^2} \Big) \qquad \text{on } \quad \R^2\setminus \{0\},
\end{equation}
generates the so-called Aharonov-Bohm magnetic field which corresponds to a Dirac delta placed in the origin. This field is fully characterized by its constant flux $\Phi=\Phi(r)$. The associated magnetic Hamiltonian, which we denote by $H_\Phi$, then satisfies 
\begin{equation} \label{eq:bel}
N(H_\Phi-V, 0) \, \leq \, C_\Phi  \,   \|V \|_{L^1(\R_+, L^\infty(\Sph^1))} ,
\end{equation}
where the constant $C_\Phi$ is finite if and only if $\Phi\notin\Z$. Estimate \eqref{eq:bel} was obtained in \cite{bel}. For the class of radial potentials $V$ a sharp value of the constant $C_\Phi$  was recently found by Laptev \cite{lap}. 
\end{remark}

%%%%%%%%%%%%%%%%%%%%%%%%%%%%%%%%%%%%%%%%%%%%%%%%

\section{\bf Discussion} \label{sect:discussion}

Inequalities \eqref{eq-clr-1}, \eqref{eq-clr-2} and \eqref{clr-eq-radial}, \eqref{clr-eq-integer} fail in the absence of magnetic field, since $N(H_0+\lambda\, V,0)\geq 1$ for all $\lambda>0$ provided $V$ is non-positive in the integral mean. In order to discuss the sharpness of the respective integral weights, we consider the following model potentials:
\begin{equation} \label{local}
V_\sigma(x) = \left\{
\begin{array}{l@{\quad}cr}
r^{-2}\, |\ln r|^{-2}\, |\ln |\ln r||^{-1/\sigma} & \text{if\, \,} & r<
e^{-2}  \\
0 & \, \, \text{if \, }  & r \geq e^{-2} 
\end{array}
\right. \qquad r=|x|,
\end{equation}
and  
\begin{equation} \label{global}
W_\sigma(x) = \left\{
\begin{array}{l@{\quad}cr}
r^{-2}\, |\ln r|^{-2}\, |\ln \ln r|^{-1/\sigma} & \text{if\, \,} & r>
e^{2}  \\
0 & \, \, \text{if \, }  & r \leq e^{2} 
\end{array}
\right. \qquad r=|x|,
\end{equation}
taken from \cite{bl}. Accordingly, we introduce the potential classes 
\begin{align}
\W_\sigma & := \Big\{\, 0<V\in L^1(\R^2)\, : \, V(x) =V(|x|),\ W_\sigma(x) = \mathcal{O}(V (x)), \quad |x| \to\infty \Big\},  \label{class-W} \\
\V_\sigma & := \Big\{\, 0<V\in L^1(\R^2)\, : \, V(x) =V(|x|),\ V_\sigma(x) = \mathcal{O}(V (x)), \qquad |x| \to 0 \Big\},  \label{class-V} 
\end{align}
which represent potentials with a slow decay at infinity and with a strong singularity in the origin, respectively.

One of the reasons for the failure of the Cwikel-Lieb-Rosenblum inequality in dimension two is the fact that for $\sigma>1$ the counting functions $N(-\Delta-\lambda\, V_\sigma,0)$ and $N(-\Delta-\lambda\, W_\sigma,0)$ have a super-linear growth in the coupling constant $\lambda$: 
\begin{equation} \label{birman-laptev}
N(-\Delta-\lambda\, V_\sigma,0) \, \sim\, N(-\Delta-\lambda\, W_\sigma,0) \, \sim \, \lambda^\sigma \qquad \text{as\, } \lambda\to\infty,
\end{equation} 
see \cite[Sec.6]{bl} for details. Below we show that this phenomenon occurs also for certain magnetic Schr\"odinger operators.

\begin{proposition} \label{examples}
Let $B(x)=B(|x|)$ be compactly supported and such that $B\in L^q(\R^2)$ for some $q>1$. Then 
\begin{equation} \label{noweyl:local}
\liminf_{\lambda\to\infty} \, \lambda^{-\sigma} N(H_B-\lambda\, V,0) \,  > \, 0 \qquad \forall\ V\in \V_\sigma,\ \sigma >1.
\end{equation}
If moreover $\Phi\in\Z$, then in addition to \eqref{noweyl:local} we also have 
\begin{equation} \label{noweyl:global}
\liminf_{\lambda\to\infty} \, \lambda^{-\sigma} N(H_B-\lambda\, V,0) \,  > \, 0 \qquad \forall\ V\in \W_\sigma,\ \sigma >1.
\end{equation}
\end{proposition}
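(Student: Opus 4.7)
The plan is to build, for each large $\lambda$, a family of at least $\gtrsim\lambda^\sigma$ mutually orthogonal trial functions on which the form of $H_B-\lambda V$ is negative, and then to invoke the variational principle; both parts ultimately recycle the trial functions behind \eqref{birman-laptev}, which \cite[Sec.~6]{bl} constructs explicitly. Radiality of $B$ lets me choose the gauge $A(x)=(\Phi(r)/r)\hat\theta$; a test function $u(r,\theta)=f(r)e^{im\theta}$ then satisfies
\begin{equation*}
\|(i\nabla+A)u\|^2=2\pi\int_0^\infty\!\Big(|f'(r)|^2+\frac{(\Phi(r)-m)^2}{r^2}|f(r)|^2\Big)\,r\,dr,
\end{equation*}
and after the change of variable $t=\ln r$, $g(t)=f(e^t)$, this becomes $2\pi\int_\R(|g'|^2+(\Phi(e^t)-m)^2|g|^2)\,dt$, while for radial $V$ the potential term is $2\pi\int U(t)|g|^2\,dt$ with $U(t)=e^{2t}V(e^t)$.

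For \eqref{noweyl:local} I work in the sector $m=0$. Since $B\in L^q(\R^2)$ is compactly supported with $q>1$, H\"older's inequality yields $|\Phi(r)|\leq C\,\|B\|_{L^q(\B_r)}\,r^{2/q'}\to 0$ as $r\to 0$, hence $\Phi(e^t)^2\to 0$ as $t\to -\infty$. For $V\in\V_\sigma$ one has $U(t)\geq c\,t^{-2}|\ln|t||^{-1/\sigma}$ for $t$ sufficiently negative. Transposing the construction of \cite[Sec.~6]{bl} to the $t$-variable produces $N(\lambda)\gtrsim\lambda^\sigma$ functions $g_k\in C_0^\infty(\R)$ with pairwise disjoint supports $I_k\subset(-\infty,t_0(\lambda))$, where $t_0(\lambda)\to -\infty$ as $\lambda\to\infty$, satisfying $\int|g_k'|^2-\lambda\int U|g_k|^2\leq -\delta\,\|g_k\|_{L^2}^2$ for some $\delta>0$ independent of $k,\lambda$. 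Choosing $\lambda$ large enough that $\sup_{t<t_0(\lambda)}\Phi(e^t)^2<\delta/2$, the magnetic correction $\int\Phi(e^t)^2|g_k|^2\,dt$ is absorbed, so the radial lifts $u_k(x)=g_k(\ln|x|)$ witness $N(H_B-\lambda V,0)\gtrsim\lambda^\sigma$.

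For \eqref{noweyl:global} the integer-flux hypothesis lets me gauge the Aharonov-Bohm tail away. Fix $R$ with $\supp B\subset\B_R$. The operator $(\mathcal{U}\psi)(x)=e^{-i\Phi\theta(x)}\psi(x)$ is a well-defined unitary on $L^2(\R^2)$ (single-valued because $\Phi\in\Z$) and a direct computation gives $\mathcal{U}(i\nabla+A)\mathcal{U}^{-1}=i\nabla+\tilde A$ with $\tilde A(x)=((\Phi(r)-\Phi)/r)\,\hat\theta$, which vanishes identically on $\R^2\setminus\B_R$. Since $V$ is radial, $\mathcal{U}V\mathcal{U}^{-1}=V$, and thus $N(H_B-\lambda V,0)=N(H_{\tilde A}-\lambda V,0)$. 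Dirichlet bracketing on $\Omega=\R^2\setminus\B_R$, on which $H_{\tilde A}$ coincides with $-\Delta$, combined with $V\geq c\,W_\sigma$ on $\Omega$ (enlarging $R$ above $e^2$ if necessary), yields
\begin{equation*}
N(H_B-\lambda V,0)\geq N\big((-\Delta-c\lambda W_\sigma)\big|_\Omega^D,\,0\big)\gtrsim\lambda^\sigma,
\end{equation*}
the last step again by \cite[Sec.~6]{bl}, since the trial functions there for $W_\sigma$ concentrate at infinity and can be taken supported in $\Omega$.

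The main obstacle is the quantitative matching in \eqref{noweyl:local}: one must verify that the Birman-Laptev construction can be arranged so that the supports of the $g_k$ simultaneously (i) retreat into $\{t\to-\infty\}$ as $\lambda\to\infty$ and (ii) still deliver $\sim\lambda^\sigma$ many test functions with a uniform energy gap $-\delta\,\|g_k\|_{L^2}^2$. The slow, iterated-logarithmic profile of $V_\sigma$ leaves considerable freedom to translate the scales of that construction, while the decay $\Phi(r)=o(1)$ at the origin, guaranteed by $B\in L^q$ with $q>1$, is what turns the magnetic term into a genuinely negligible additive perturbation on each $g_k$.
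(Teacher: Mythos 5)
Your proof is essentially correct, and for \eqref{noweyl:local} it is the same strategy as the paper's: restrict to the $m=0$ angular channel, invoke the Birman--Laptev construction for $V_\sigma$, and argue that the magnetic term is negligible near the origin because $B\in L^q$, $q>1$, forces $\Phi(r)\to 0$. The one place where your write-up is more fragile than necessary is the point you yourself flag: you ask for a \emph{uniform} gap $-\delta\|g_k\|^2$ in the Birman--Laptev trial functions so that $\sup\Phi(e^t)^2<\delta/2$ can be absorbed. You do not need this. Your own H\"older estimate gives $\Phi(r)^2\le C r^{4/q'}$ with $4/q'>0$, hence $\Phi(r)^2/r^2=o\big(V_\sigma(r)\big)$ as $r\to 0$ (equivalently $\Phi(e^t)^2=o(U(t))$ as $t\to-\infty$), so on supports retreating to the origin the magnetic term is dominated \emph{pointwise} by an arbitrarily small multiple of $\lambda U$ and can be absorbed into the potential with no information about the size of the individual eigenvalue gaps; this is exactly how the paper argues (it then concludes via Dirichlet--Neumann bracketing and the quoted limit \eqref{bl:eq} rather than by exhibiting trial functions). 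For \eqref{noweyl:global} your route is genuinely different in packaging, though equivalent in substance: you gauge away the integer flux with $\mathcal{U}=e^{-i\Phi\theta}$ (single-valued precisely because $\Phi\in\Z$), so that the vector potential vanishes outside $\supp B$, and then Dirichlet-bracket on the exterior domain where the operator is the free Laplacian; the paper instead stays in the angular decomposition and selects the channel $h_k$ with $\Phi+k=0$, whose effective potential $(\Phi(r)+k)^2/r^2$ vanishes for large $r$, reducing again to $P_0-\lambda c\,W_\sigma$. Your version makes the role of the integer-flux hypothesis more transparent (it is exactly the single-valuedness of the gauge factor); the paper's version avoids any discussion of the two-dimensional exterior problem by working entirely with one-dimensional radial operators, for which the cited asymptotics \eqref{bl:eq} are stated. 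Both are valid.
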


\smallskip

\noindent 
Equation \eqref{noweyl:local} shows that estimate \eqref{eq:bel} must fail if the magnetic field satisfies conditions of Proposition \ref{examples}.

\smallskip

\begin{remark}
Proposition \ref{examples}, namely equation \eqref{noweyl:local}, shows that inequality \eqref{eq-clr-2} fails if $a=0$. Moreover, equation \eqref{noweyl:global} implies that Assumption \ref{ass-field} cannot be left out from Theorem \ref{clr-mag-2}. Indeed, since $W_\sigma\in L^1_{loc}(\R^2)\cap L^{1+a}(\R^2, (1+|x|)^{2a}\, dx)$ for any $a>0$,  for radial and compactly supported magnetic field with $\Phi\in\Z$ equation \eqref{noweyl:global} would be in contradiction with inequality \eqref{eq-clr-2}. As explained in Remark \ref{rem1}, such magnetic fields are excluded by Assumption \ref{ass-field}. 
\end{remark}	

\begin{remark}
Equation \eqref{noweyl:global} also tells us that the weight $(1+|\log |x||)^{1+a}$ in the first term on the rhs of \eqref{eq-clr-1} cannot be removed. Indeed, for a magnetic field with integer flux and $V=\lambda\, W_\sigma$ inequality \eqref{eq-clr-1} without the factor $(1+|\log |x||)^{1+a}$ would contradict equation \eqref{noweyl:global}. 
\end{remark}

\begin{remark}
The arguments of the previous remarks apply of course also to Theorems \ref{clr-radial} and \ref{clr-radial-integer}. Namely, equation \eqref{noweyl:local} shows that the logarithmic weight in the first term on the right hand side of \eqref{clr-eq-radial} cannot be omitted, while equation \eqref{noweyl:global} says that the condition $\Phi\not\in\Z$ in Theorem \ref{clr-radial} is necessary. In view of \eqref{noweyl:global}, the same reasoning implies that the term $\|V \log |x| \|_{L^1(\R^2)}$ on the right hand side of \eqref{clr-eq-integer} cannot be replaced by $\|V \log |x| \|_{L^1(\B_1)}$.
\end{remark}

%%%%%%%%%%%%%%%%%%%%%%%%%%%%%%%%%%%%%%%%%%%%%%%%%

\section{\bf Proofs of the main results: general fields}

We first prove the corresponding upper bounds on $N(H_B-V,0)$. This will imply the closedness of the form \eqref{quad-form}. We start with an auxiliary Lemma on heat kernels of certain Schr\"odinger operators with positive electric potential. Let $0\leq \rho \leq 1,\, \rho\neq 0$ be a radial function from $C^1(\R^2)$ with support in $\B_1$. Introduce a family of potential functions $U_\beta$ given as follows: 
\begin{equation} \label{ubeta}
U_\beta(x)=U_\beta(|x|)= \left\{
\begin{array}{l@{\quad}cr}
\beta^2 & \text{if } & |x| \leq 1  \\
\beta^2\, |x|^{-2} & \text{if}  & |x| >1 
\end{array}
\right. \quad \beta >0, \qquad 
U_0 (x) = U_0(|x|) = \rho(|x|).
\end{equation}
Next we define Schr\"odinger operators
$$
\A_\beta = -\Delta + U_\beta \quad \mbox{in\, } L^2(\R^2).
$$
In view of the standard Beurling-Deny criteria, the operators $\A_\beta$ generate contraction semigroups $e^{-t \A_\beta}$ on $L^2(\R^2)$ with almost everywhere positive integral kernels $e^{-t \A_\beta}(x,y)=:k_\beta(t,x,y)$. 

\begin{lemma} \label{abeta}
For almost every $x\in\R^2$ and all $t>0$ we have
\begin{equation} \label{hk-beta}
k_\beta(t,x,x)= e^{-t \A_\beta}(x,x)  \leq C\, \min\big\{\, t^{-1},\, (1+|x|)^{2\beta}\, \, t^{-1-\beta}\big\} \qquad \beta >0,
\end{equation}
and 
\begin{equation} \label{hk-0}
k_0(t,x,x)=e^{-t \A_0}(x,x)  \leq C\, 
\left\{
\begin{array}{l@{\quad}cr} 
t^{-1} & \text{if } & t \leq e  \\
\min\big\{\, t^{-1},\, (1+|\log |x||)^{2}\, \, t^{-1}\, (\log t)^{-2} \big\}  & \text{if}  & t >e 
\end{array}
\right. 
\end{equation}
for some constant $C$.
\end{lemma}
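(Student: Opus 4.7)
The $t^{-1}$ bound in both \eqref{hk-beta} and \eqref{hk-0} is immediate from pointwise domination of $\A_\beta$ by $-\Delta$: since $U_\beta\geq 0$, the Beurling--Deny criteria yield $0\leq k_\beta(t,x,y)\leq e^{t\Delta}(x,y)=(4\pi t)^{-1}\exp(-|x-y|^2/(4t))$. The substance of the lemma is therefore the sharper large-time decay, which I would derive via a Doob-type $h$-transform against a positive super-solution of $\A_\beta$.

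For $\beta>0$, I would choose a smooth positive $\phi_\beta$ with $\phi_\beta(x)\simeq (1+|x|)^\beta$, equal to $1$ on $\B_{1/2}$ and to $|x|^\beta$ outside $\B_2$. A direct radial calculation gives $-\Delta|x|^\beta+\beta^2|x|^{-2}\cdot|x|^\beta=0$ for $|x|>0$, while $-\Delta\phi_\beta+U_\beta\phi_\beta=\beta^2>0$ on $\B_{1/2}$; by a suitable choice of profile on $\B_2\setminus\B_{1/2}$ one can arrange $\A_\beta\phi_\beta=:W_\beta\phi_\beta$ with $W_\beta\geq 0$. Substituting $u=\phi_\beta v$ into $\partial_t u=-\A_\beta u$ yields the weighted heat equation
\[
\partial_t v=\phi_\beta^{-2}\nabla\cdot(\phi_\beta^2\nabla v)-W_\beta v\qquad\text{on } L^2(\phi_\beta^2\,dx),
\]
whose kernel $\tilde k_\beta$ with respect to $d\mu_\beta:=\phi_\beta^2\,dx$ is related to $k_\beta$ by $k_\beta(t,x,y)=\phi_\beta(x)\phi_\beta(y)\tilde k_\beta(t,x,y)$; since $W_\beta\geq 0$, monotonicity of the semigroup in the potential gives $\tilde k_\beta\leq \tilde k_\beta^{(0)}$, where $\tilde k_\beta^{(0)}$ is the heat kernel of the pure weighted Laplacian $L_\beta^{(0)}=-\phi_\beta^{-2}\nabla\cdot(\phi_\beta^2\nabla\cdot)$.

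The measure $\mu_\beta$ satisfies $\mu_\beta(\B_r)\simeq r^{2+2\beta}$ for $r\geq 1$, and the Dirichlet form $\int|\nabla v|^2\phi_\beta^2\,dx$ satisfies a Nash inequality of effective dimension $n=2(1+\beta)$. Nash's classical argument then gives $\tilde k_\beta^{(0)}(t,x,x)\leq Ct^{-1-\beta}$, whence
\[
k_\beta(t,x,x)=\phi_\beta(x)^2\,\tilde k_\beta(t,x,x)\leq C(1+|x|)^{2\beta}\,t^{-1-\beta},
\]
and \eqref{hk-beta} follows upon taking the minimum with the free bound. The case $\beta=0$ proceeds along the same template with a logarithmically growing super-solution: since $\rho\geq 0$ is nontrivial and compactly supported, $-\Delta+\rho$ is subcritical in $\R^2$ and admits a positive function $\phi_0$ satisfying $\A_0\phi_0=0$ outside a compact set, with $\phi_0(x)\simeq 1+|\log|x||$, extended smoothly inside so that $\A_0\phi_0\geq 0$ everywhere. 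The associated measure $d\mu_0=\phi_0^2\,dx$ satisfies $\mu_0(\B_r)\simeq r^2(\log r)^2$ for $r>e$, and a Nash inequality with logarithmic correction yields $\tilde k_0(t,x,x)\leq Ct^{-1}(\log t)^{-2}$ for $t>e$; combined with $\phi_0(x)^2\simeq (1+|\log|x||)^2$ this gives the bound in \eqref{hk-0}, the short-time regime $t\leq e$ being covered by the free bound.

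The most delicate step is the borderline case $\beta=0$: both the construction of $\phi_0$ with the exact logarithmic growth, and the verification of the log-corrected Nash inequality producing the $(\log t)^{-2}$ gain, require more care than in the polynomial setting. In the case $\beta>0$ the weighted space behaves like a manifold of dimension $2(1+\beta)$ with polynomial volume growth, so once $\phi_\beta$ is in hand, the remaining argument is essentially classical.
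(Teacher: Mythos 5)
Your strategy is, at its core, the same as the paper's: conjugate $\A_\beta$ by a positive (super)harmonic function to turn it into a weighted Laplacian on $L^2(h^2\,dx)$, and then read off the on-diagonal decay from the volume growth of the weighted manifold. The paper implements this by taking an exact positive solution of $\A_\beta u=0$ (which exists by the Allegretto--Piepenbrink theorem since $\spec \A_\beta=[0,\infty)$), angular averaging to get a radial $h_\beta$, and solving the resulting ODE $(rh_\beta')'=rU_\beta h_\beta$ explicitly for $r>1$ to obtain $h_\beta(r)=a_1r^\beta+b_1r^{-\beta}$ with $a_1>0$ (resp. $h_0(r)=a_2+b_2|\log r|$ with $b_2>0$); it then quotes the Grigor'yan--Saloff-Coste stability theorems to get the two-sided Li--Yau bound $e^{t\Delta_\beta}(x,x)\simeq 1/V_\beta(x,\sqrt t)$ with $V_\beta(x,\sqrt t)\simeq t\,h_\beta^2(|x|+\sqrt t)$, from which both \eqref{hk-beta} and \eqref{hk-0} follow by the same computation. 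Your version replaces the exact zero mode by a hand-built supersolution plus monotonicity in the potential, and replaces Li--Yau by Nash inequalities. For $\beta>0$ this is a legitimate variant: the uniform lower volume bound (the $\mu_\beta$-volume of any ball of radius $r$ is $\gtrsim r^{2+2\beta}$, for $r\le1$ because $\phi_\beta\ge1$ and for $r\ge1$ by direct computation) together with doubling does yield the dimension-$2(1+\beta)$ Faber--Krahn/Nash inequality and hence the uniform bound $Ct^{-1-\beta}$.

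Two points need more than you give them. First, the construction of $\phi_\beta$: on the transition annulus you must keep $-\Delta\phi_\beta+U_\beta\phi_\beta\ge0$ while interpolating from the constant $1$ to $|x|^\beta$ with matching derivatives; since $U_\beta\phi_\beta$ is only of size $\beta^2$ there while the interpolation forces $\Delta\phi_\beta$ of size comparable to $\beta$, a naive profile can violate the supersolution inequality. The clean fix is the paper's: solve $(r\phi')'=rU_\beta\phi$ exactly from $r=0$; positivity and monotonicity then force the coefficient of the growing branch to be positive, which is all that is needed. Second, and more seriously, your $\beta=0$ case rests on a ``Nash inequality with logarithmic correction'' producing $\tilde k_0(t,x,x)\le Ct^{-1}(\log t)^{-2}$ for $t>e$. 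This is not a standard citable fact: the classical Nash iteration yields polynomial decay rates, and extracting a $(\log t)^{-2}$ gain requires either a generalized Nash inequality with a carefully tuned rate function or, as in the paper, the full Li--Yau estimate valid under volume doubling plus Poincar\'e, after which the logarithm comes for free from $V_0(x,\sqrt t)\gtrsim t(\log t)^2$ for $t>e$. As written, that step is the one genuine gap; you flag it yourself as delicate, but it is precisely where the argument still has to be completed, and the paper's route through the two-sided kernel estimate of Grigor'yan--Saloff-Coste handles both cases uniformly without it.
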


\begin{proof}
The spectrum of $\A_\beta$ coincides, for all $\beta\geq 0$, with the positive half-line $[0,\infty)$. Hence by the Allegretto-Piepenbrink theorem, see e.g. \cite{mp}, there exists a
positive solution $u_\beta$ to the equation $\A_\beta\, u_\beta=0$. Since the
potential $U_\beta$ is H\"older continuous, the elliptic regularity ensures that
$u_\beta\in C^2(\R^2)$. The radial function $h_\beta$ given by
$$
h_\beta(|x|) = \int_0^{2\pi} u_\beta (|x|,\theta)\, d\theta,
$$
then also satisfies $\A_\beta\, h_\beta=0$. Thus the weighted Laplace operator
\begin{equation}
-\Delta_\beta = h_\beta^{-1}\, \A_\beta\, h_\beta \qquad \mbox{in\,  \, \, } L^2(\R^2,\, h_\beta^2\, dx),
\end{equation}
generated by the quadratic form 
$$
\int_{\R^2} |\nabla u|^2\, h_\beta^2(x)\, dx, \qquad u\in H^1(\R^2,\, h_\beta^2\, dx),
$$
is unitarily equivalent to $\A_\beta$ and its heat kernel satisfies 
\begin{equation} \label{kernels-w}
e^{-t \A_\beta}(x,y) = h_\beta(x)\, h_\beta(y)\, e^{t \Delta_\beta}(x,y), \quad x,y \in \R^2.
\end{equation}
Now denote $r=|x|$ and observe that 
$$
(r\, h_\beta'(r))' = h_\beta(r)\, r\, U_\beta(r), 
$$
which implies that $h_\beta$ is increasing and that for $r>1$ it holds 
\begin{align}
h_\beta(r)  & = a_1\, r^\beta +b_1\, r^{-\beta}, \qquad \beta >0
\label{nonzero}\\
h_0(r)  & = a_2 +b_2\, |\log r|,  \qquad \, \,    \beta = 0.
\label{zero}
\end{align}
Since $h_\beta$ is positive and increasing it follows that $a_1>0,\, b_2>0$.  Thus for any $\beta\geq 0$ there exists  a constant $M_\beta$ such that 
\begin{equation} \label{help}
h_\beta(2r)\,  \leq\, M_\beta\,  h_\beta(r), \qquad \forall\, r\in \R_+.
\end{equation}
Let $V_\beta(x, s)$ denote the volume of the ball of radius $s$ centered in $x$ in the measure  $h_\beta^2\, dx$.  In view of \eqref{nonzero} and \eqref{zero} it easily follows that the manifold 
$(\R^2,\, h_\beta^2\, dx)$ satisfies the volume doubling property; i.e. there exists a constant $c$ such tat for any $s$ it holds
$$
V_\beta(x, 2 s)\, \leq\, c\, V_\beta(x, s).
$$
Equation \eqref{help} and the theorems \cite[Thm.5.7]{gs05} and \cite[Thm.2.7]{gs05} thus 
imply that the manifold $(\R^2,\, h_\beta^2\, dx)$ satisfies the Li-Yau estimate for its heat kernel:
\begin{equation} \label{li-yau}
e^{t \Delta_\beta}(x,y) \, \simeq \,  \frac{C\, e^{-c\, \frac{|x-y|^2}{t}}, }{\sqrt{V_\beta(x,\sqrt{t})}\, \, \sqrt{V_\beta(x,\sqrt{t})}}\, ,
\end{equation}
where $c$ and $C$ are positive constants. However, by \eqref{nonzero} and  \eqref{zero} we have
$$
V_\beta(x,\sqrt{t}) \, \simeq \,  t\, h^2_\beta(|x|+\sqrt{t}).
$$
Hence
\begin{equation} \label{heat-kernel-2d}
e^{-t \A_\beta}(x,y) \, \simeq \, C\, \frac{h_\beta(|x|)\, h_\beta(|y|)}{t\,
h_\beta(|x|+\sqrt{t})\, h_\beta(|y|+\sqrt{t})}\, \, e^{-c\, \frac{|x-y|^2}{t}}.
\end{equation}
Since  $h_\beta$ is increasing, this together with \eqref{kernels-w} and the estimate
$$
e^{-t \A_\beta}(x,y) \leq e^{t \Delta}(x,y) = \frac{1}{4\pi t}\, e^{-\frac{|x-y|^2}{4t}}  \qquad a.e.\  \ x,y \in\R^2,
$$
which follows by the Trotter product formula, imply equations \eqref{hk-beta} and \eqref{hk-0}. 
\end{proof}

%%%%%%%%%%%%%%%%%%%%%%%%%%%%%%%%%%%%%%%%%%%%%%%%%

\subsection{Proof of Theorem \ref{clr-mag-1}} Let $\chi_1$ be the characteristic function of $\B_1$. From \cite{timo} we know that the Hardy type inequality
\begin{equation} \label{hardy-t}
H_B \, \geq \, \gamma\, \chi_1
\end{equation}
holds, for some constant $\gamma>0$, in the sense of quadratic forms on $C_0^\infty(\R^2)$. 

\begin{proof}[Proof of Theorem \ref{clr-mag-1}] Let $a>0$. Inequality \eqref{hardy-t} and the variational principle imply that for any $\eps \in (0,1)$ we have  
\begin{align}
N(H_B-V,0) & \leq N(H_B-V,0) \leq N(\eps\, H_B+(1-\eps)\, c\, \chi_1- V,0) 
\nonumber \\ 
& \leq N\Big(H_B+\frac{c_1 (1-\eps)}{\eps} \, \chi_1-\eps^{-1}\,  V,0\Big), \label{basic-1}
\end{align}
where we have used the fact that multiplying an operator by a positive constant does not change the number of its negative eigenvalues. Next we chose $\eps$ such that 
$$
\frac{c_1 (1-\eps)}{\eps} \, \chi_1 \geq U_0,
$$
which is possible due to the hypotheses on $U_0$, so that 
$$
N(H_B-V,0)  \leq N(H_B+U_0 -  \eps^{-1}\,  V,0).
$$
For each $\beta \geq 0$ the operator $H_B+U_\beta$ generates a contractive semigroup $e^{-s(H_B+U_\beta)}$ in $L^2(\R^2)$. Let 
$$
K_\beta(s,x,y) := e^{-s(H_B+U_\beta)}(x,y)  \qquad x,y\in \R^2.
$$
be its integral kernel. By the the diamagnetic inequality, see e.g. \cite{sim2,hs}, we have
\begin{equation} \label{diamag}
\big| K_\beta(s,x,y) \big| \, \leq \, k_\beta(s,x,y), \qquad \beta\geq 0, \quad a.e.\ \ x,y\in\R^2, \quad s>0,
\end{equation}
This allows us to use a generalisation of the Lieb's inequality \cite{lieb}, see  \cite[Thm.2.5]{rs} or \cite{ahs,fls}, and therefore to obtain the upper bound
\begin{align} 
N(H_B+U_0- \eps^{-1}\,   V,0) & \leq\, C_\eps \int_0^\infty\, \frac 1t\, \int_{\R^2} k_0(t,x,x)\, (t\, V(x)-1)_+\, dx\, dt.  \nonumber\\
&  \leq C_\eps\int_{\R^2} \int_{1/V(x)}^\infty\,  k_0(t,x,x) \, V(x)\, dt \, dx. \label{lieb-2}
\end{align}
Next we set $t_0(x) = e +\frac{1}{V(x)}$ and perform the integration w.r.t. $t$ using the estimates
$$
k_0(t,x,x) \leq \frac{c}{t} ,\qquad 0< t < t_0(x), \qquad k_0(t,x,x) \leq \frac{c\, (1+|\log |x||)^{1+a}}{t \, (\log t)^{1+a}},\qquad t_0(x) \leq t, 
$$
which follow easily from \eqref{hk-0}. This gives inequality \eqref{eq-clr-1}. Moreover, the operator $H_B-V$ has only finitely many eigenvalues which shows that the quadratic form \eqref{quad-form} is bounded from below and therefore closable. 
\end{proof}

%%%%%%%%%%%%%%%%%%%%%%%%%%%%%%%%%%%%%%%%%%%%%%%%%

\subsection{Proof of Theorem \ref{clr-mag-2}} The arguments follow closely the proof of Theorem \ref{clr-mag-1}. In view of assumption \ref{ass-field} and \cite{lw} we have 
\begin{equation} \label{hardy-at}
H_B \, \geq \, c_B\, U_1
\end{equation}
in the sense of quadratic forms on $C_0^\infty(\R^2)$, where $c_B$ is a positive constant, see also \cite{bls}. 

\begin{proof}[Proof of Theorem \ref{clr-mag-2}] Fix $a>0$ and chose $\eps>0$ such that 
$$
a^2 = \frac{(1-\eps)\, c_B}{\eps}.
$$
Mimicking the argument used in \eqref{basic-1} and taking into account inequality \eqref{hardy-at} we get 
\begin{align}
N(H_B-V,0) & \leq  N(H_B+U_a-\eps^{-1}\,  V,0). \label{basic-2}
\end{align}
In the same way as in the proof of Theorem \ref{clr-mag-1} we arrive at 
$$
N(H_B+U_a-\eps^{-1}\,  V,0) \leq C_a \int_{\R^2} \int_{1/V(x)}^\infty\,  k_a(t,x,x) \, V(x)\, dt \, dx.
$$
Inequality \eqref{eq-clr-2} then follows from estimate \eqref{hk-beta}. 
\end{proof}

%%%%%%%%%%%%%%%%%%%%%%%%%%%%%%%%%%%%%%%%%%%%%%%%%

\subsection{Proof of Corollary \ref{cor-sobolev}} If $|A|$ is bounded, then the closure of $C_0^\infty(\R^2)$ with respect to the norm $\| (i\nabla +A) u\|_2^2+ \|u\|_2^2$ coincides with the Sobolev space $H^1(\R^2)$. Hence it suffices to prove \eqref{eq:sobolev} for $u\in C_0^\infty(\R^2)$. To this end we follow the approach of \cite{fls}. 

\begin{proof}[Proof of Corollary \ref{cor-sobolev}]
Let $u\in C_0^\infty(\R^2)$ and assume that $2 <q <\infty$. Let 
\begin{equation} \label{choice}
V(x) = \eta\, \Big ( C(B, 2/(q-2)) \int_{\R^2} |u(x)|^q\, (1+|x|)^{-2}\, dx\Big)^{\frac{2-q}{q}}\  |u(x)|^{q-2}\, (1+|x|)^{-2},
\end{equation}
where $0< \eta <1$ and $C\big(B,2/(q-2))$ is the constant in inequality \eqref{eq-clr-2}. It follows from  \eqref{eq-clr-2} that $N(H_B-V,0) =0$. Hence 
$$
\int_{\R^2}  |(i \nabla + A)\, u|^2\, dx \, \geq \, \int_{\R^2}  V(x)\, |u(x)|^2\, dx, 
$$
which implies \eqref{eq:sobolev}. If $q=2$, then the statement is equivalent to the Hardy inequality  \eqref{hardy-at}.
\end{proof}

%%%%%%%%%%%%%%%%%%%%%%%%%%%%%%%%%%%%%%%%%%%%%%%%%
\section{\bf Hardy inequalities} 

\noindent In this section we prove some Hardy type inequalities for the operator $H_B$. These inequalities   will be used in the proofs of Theorems \ref{clr-radial} and \ref{clr-radial-integer}.  

\begin{lemma} \label{hardy-log-lem}
Assume that $A\in L^2_{loc}(\R^2)$ generates a non-zero magnetic field. Then there exists a constant $C(A)>0$ such that 
\begin{equation} \label{hardy-log}
\int_{\R^2} |(\nabla +i A)\, u(x) |^2\, dx \, \geq \, C(A) \int_{\R^2} \frac{|u(x)|^2}{1+|x|^2\, \log ^2|x|}\, dx, \qquad \forall\, u\in C_0^\infty(\R^2).
\end{equation}
\end{lemma}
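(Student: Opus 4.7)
The plan is to decouple the magnetic structure from the weighted $L^2$ estimate by reducing \eqref{hardy-log} to a purely scalar Hardy-type inequality, and then to absorb a residual local $L^2$ term by \eqref{hardy-t}.

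The scalar auxiliary to establish is: there exists $c_0>0$ such that
$$\int_{\R^2}|\nabla v|^2\,dx + \int_{\B_1}|v|^2\,dx \;\geq\; c_0\int_{\R^2}\frac{|v|^2}{1+|x|^2\log^2|x|}\,dx \qquad \forall v\in H^1(\R^2).$$
To prove this, note that the weight $(1+|x|^2\log^2|x|)^{-1}$ is bounded by $1$ on $\B_2$ and by $(|x|^2\log^2|x|)^{-1}$ on $\R^2\setminus\B_2$. For the exterior contribution I apply the classical logarithmic Hardy inequality
$$\int_{|x|>1}\frac{|f|^2}{|x|^2\log^2|x|}\,dx \;\leq\; 4\int_{|x|>1}|\nabla f|^2\,dx \qquad \forall f\in H^1_0(\R^2\setminus\B_1),$$
which reduces to the one-dimensional Hardy inequality on $\R_+$ after the substitution $s=\log r$ in polar coordinates. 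Applied to $f=\phi v$, where $\phi\in C^\infty(\R^2)$ is a radial cutoff with $\phi\equiv 0$ on $\B_1$ and $\phi\equiv 1$ on $\R^2\setminus\B_2$, this gives
$$\int_{|x|\geq 2}\frac{|v|^2}{|x|^2\log^2|x|}\,dx \;\leq\; C_3\int_{\R^2}|\nabla v|^2\,dx + C_4\int_{\{1\leq|x|\leq 2\}}|v|^2\,dx.$$
The $\B_2$ contribution and the annular error are then absorbed via the Friedrichs-type bound
$$\int_{\B_2}|v|^2\,dx \;\leq\; C_1\int_{\B_1}|v|^2\,dx + C_2\int_{\B_2}|\nabla v|^2\,dx,$$
which I obtain by a standard Rellich-based contradiction argument: any normalized violating sequence in $H^1(\B_2)$ would converge strongly in $L^2(\B_2)$ to a constant that vanishes on $\B_1$, contradicting normalization.

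For the conclusion, let $u\in C_0^\infty(\R^2)$ and apply the scalar inequality to $v=|u|\in H^1(\R^2)$. Combined with the pointwise diamagnetic inequality $|\nabla|u||\leq|(\nabla+iA)u|$ a.e., this yields
$$c_0\int_{\R^2}\frac{|u|^2}{1+|x|^2\log^2|x|}\,dx \;\leq\; \int_{\R^2}|(\nabla+iA)u|^2\,dx + \int_{\B_1}|u|^2\,dx.$$
Inequality \eqref{hardy-t} gives $\gamma\int_{\B_1}|u|^2\,dx\leq \int_{\R^2}|(\nabla+iA)u|^2\,dx$, and \eqref{hardy-log} follows with $C(A)=c_0\gamma/(\gamma+1)$.

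The main obstacle lies in the scalar auxiliary, where one must reconcile the slow logarithmic decay of the weight at infinity (handled by the exterior Hardy inequality, with the cutoff vanishing on $\B_1$ so as to tame the singularity of $\log^{-2}|x|$ at $|x|=1$) with the non-trivial $L^2$-mass the cutoff leaves on the annulus (absorbed by compactness). The magnetic field enters only at the very end, through \eqref{hardy-t}, which anchors the $\int_{\B_1}|v|^2$ term and is precisely why the inequality must fail for $A\equiv 0$.
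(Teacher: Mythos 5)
Your proof is correct and follows essentially the same route as the paper's: a local magnetic Hardy inequality taken from Weidl's theorem, Kato's (diamagnetic) inequality to reduce to the scalar function $|u|$, and a logarithmic Hardy inequality at infinity obtained by cutting off near the circle $|x|=1$ where the weight's $\log^{-2}$ singularity sits. The only difference is in the technical execution of the scalar step, where you absorb the annular error through a soft Rellich compactness argument while the paper uses an explicit piecewise-linear multiplier and an integration-by-parts (ground-state substitution) identity.
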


\begin{proof}
Let $u\in C_0^\infty(\R^2)$. By \cite[Thm.2.1]{timo} we have 
$$
\int_{\R^2} |(\nabla +i A)\, u(x) |^2\, dx \, \geq \, c_0 \int_{|x| \leq 3} |u(x)|^2\, \, dx, 
$$
for some $0< c_0 <1$. By Kato's inequality 
\begin{equation} \label{kato}
\|\nabla |u|\|^2 \leq  \| (\nabla +i A)\, u\|^2, \qquad u\in C_0^\infty(\R^2),
\end{equation}
see \cite{hsu,sim2},  it thus suffices to show that 
\begin{equation} \label{enough-h}
\int_0^\infty |f'(r) |^2\, r\, dr + c_0 \int_0^3 |f(r)|^2\, r\, dr \geq \, C \int_3^\infty \frac{|f(r)|^2}{r\, (\log r)^2}\,  dr 
\end{equation}
holds for all $f\in C_0^\infty(\R_+)$ and some constant $C>0$. Define the function $\phi$ by
$$
\phi(r) = \left\{
\begin{array}{l@{\quad}cr}
c_0 & \text{if \,} & 0< r \leq 1\\
c_0 (2-r)  & \text{if \,} & 1 < r \leq 2
\end{array}
\right.  ,
\qquad 
\phi(r) = \left\{
\begin{array}{l@{\quad}cr}
c_0(r-2) &  \, \text{if }  & 2 < r \leq 3 \\
c_0       & \,  \text{if }  & 3 < r  
\end{array}
\right. .
$$
A simple integration by parts then shows that 
$$
\int_0^\infty |(\phi f)'(r) |^2\, r\, dr +c_0(1-c_0) \int_0^3 |f(r)|^2\, r\, dr \leq \int_0^\infty |f'(r) |^2\, r\, dr + c_0 \int_0^3 |f(r)|^2\, r\, dr.
$$
On the other hand, since $\phi(2)=0$, integrating by parts again we obtain
$$
\int_2^\infty  \left( (\phi f)'(r) -\frac{(\phi f)(r)}{2r \log r} \right)^2 r\, dr = \int_2^\infty |(\phi f)'(r) |^2\, r\, dr - \int_2^\infty \frac{|(\phi f)(r)|^2}{4 r\, (\log r)^2}\, dr.
$$
Putting together the last two equations proves \eqref{enough-h} and hence \eqref{hardy-log}. 
\end{proof}

Hardy inequality \eqref{hardy-log} will have a crucial role in the proof of Theorem \ref{clr-radial-integer}.
Note that the weight function $(1+|x|^2\, \log ^2|x|)^{-1}$ on its right hand side belongs to $L^1(\R^2)$, cf. Lemma \ref{hardy-general} in section \ref{sec:hardy-integer}. On the other hand, Lemma \ref{no-log} below shows that on the orthogonal complement of the subspace of functions $u(x)=u(|x|)$ the logarithmic factor in \eqref{hardy-log} can be removed if the magnetic field is radial.  More general results in this direction concerning non-magnetic Hardy inequalities were obtained in \cite{sol2} .

\begin{lemma} \label{no-log}
Let the magnetic field satisfy conditions of Theorem \ref{clr-radial}. Then there exists a constant $\varkappa>0$ such that for any $u\in C_0^\infty(\R^2)$ we have 
\begin{equation} \label{hardy-complement}
 \int_0^{2\pi}\! u(r,\theta)\, d\theta=0 \quad \forall\, r>0 \quad \Rightarrow \quad \int_{\R^2} |(\nabla +i A)\, u(x) |^2\, dx \, \geq \, \varkappa \int_{\R^2} \frac{|u(x)|^2}{|x|^2}\, dx.
\end{equation}
\end{lemma}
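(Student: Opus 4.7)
The plan is to expand $u$ in angular Fourier modes in polar coordinates and reduce the lemma to a uniform spectral bound for a one-parameter family of one-dimensional Schrödinger operators. Since $B$ is radial, after a gauge transformation that preserves both the quadratic form and the vanishing of the zeroth angular mode, we may assume $A$ is in the transversal gauge, $A = \Phi(r) r^{-2}(-x_2, x_1)$, so that in polar coordinates $|(\nabla + iA)u|^2 = |\partial_r u|^2 + r^{-2} |(\partial_\theta + i\Phi(r))u|^2$. Writing $u(r,\theta) = \sum_{k\in\Z} u_k(r) e^{ik\theta}$, the hypothesis reads $u_0 \equiv 0$, and by orthogonality of the modes \eqref{hardy-complement} reduces to proving that there is $\varkappa > 0$, uniform in $k \neq 0$, such that for every admissible $u_k$,
\begin{equation*}
\int_0^\infty \Bigl[\,|u_k'|^2 + \frac{(k+\Phi(r))^2}{r^2}|u_k|^2\,\Bigr]\, r\, dr \,\geq\, \varkappa \int_0^\infty \frac{|u_k|^2}{r}\, dr. \qquad (\star_k)
\end{equation*}

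The logarithmic change of variable $\tau = \log r$, $w(\tau) = u_k(e^\tau)$, turns $(\star_k)$ into the spectral inequality $S_k \geq \varkappa$ on $L^2(\R,d\tau)$ for the one-dimensional Schrödinger operator
\begin{equation*}
S_k := -\partial_\tau^2 + V_k, \qquad V_k(\tau) := (k + \Phi(e^\tau))^2 \geq 0.
\end{equation*}
Because $\Phi(0)=0$ and $\Phi\notin\Z$, the potential has strictly positive limits $V_k(-\infty) = k^2\geq 1$ and $V_k(+\infty) = (k+\Phi)^2 \geq \operatorname{dist}(\Phi,\Z)^2 > 0$. Hence $\sigma_{\mathrm{ess}}(S_k) \subset [\mu_k,\infty)$ with $\mu_k := \min\{k^2, (k+\Phi)^2\} > 0$, and any discrete spectrum lies in $[0,\mu_k)$. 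The crux of the argument is to exclude $0$ from the spectrum: if $S_k w = 0$ with $w \in H^1(\R)$, then integration by parts (justified by elliptic regularity, since $V_k$ is bounded and hence $w \in H^2(\R) \hookrightarrow C^1_0(\R)$) gives $\int_\R (|w'|^2 + V_k|w|^2)\, d\tau = 0$, forcing $w$ to be a.e.\ constant and $V_k|w|^2 \equiv 0$; since $V_k$ is strictly positive on a set of positive measure, the only such constant in $L^2(\R)$ is $0$. Thus $\varkappa_k := \inf\sigma(S_k) > 0$ for every $k\neq 0$.

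Uniformity in $k$ follows essentially for free: with $M := \sup_{r\geq 0}|\Phi(r)| \leq \|B\|_{L^1(\R_+, s\, ds)} < \infty$, one has $V_k \geq (|k|-M)^2$ whenever $|k|\geq M$, so $\varkappa_k \to \infty$ as $|k|\to\infty$. Only finitely many $k$ with $|k|\leq M$ need individual treatment, each yielding $\varkappa_k > 0$ by the preceding argument, and $\varkappa := \min_{k\neq 0}\varkappa_k > 0$ then concludes the proof.

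The step I expect to be hardest is precisely the exclusion of a zero mode, because $V_k$ may vanish at interior points $\tau_0$ whenever $\Phi(e^{\tau_0}) = -k$ (this occurs, unavoidably, for a finite but non-empty set of ``bad'' $k$ when the range of $\Phi$ contains non-zero integers). Neither a pointwise bound $V_k \geq \varkappa$ nor a ground-state representation yields $(\star_k)$ directly, so the global spectral argument based on positivity of $V_k$ at $\pm\infty$ is essentially unavoidable; this is the precise place where the hypothesis $\Phi\notin\Z$ is used, consistently with the fact that the analogue fails for $\Phi\in\Z$ and the logarithmic weight of Lemma \ref{hardy-log-lem} is then needed. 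A secondary technical point, which I gloss over, is that the transversal gauge need not be in $L^2_{\mathrm{loc}}$ under the bare assumption $B\in L^1(\R_+,(1+r)\, dr)$; this is handled by a standard gauge transformation on a suitable annulus, or equivalently by invoking the rotation invariance of the quadratic form directly in place of fixing a concrete gauge.
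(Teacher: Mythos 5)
Your proof is correct, and it shares the paper's overall skeleton --- the angular Fourier decomposition, the reduction to the one-dimensional forms with potential $(m+\Phi(r))^2/r^2$, the immediate pointwise bound for all but finitely many modes, and the use of $\Phi(r)\to 0$ at the origin together with $\Phi\notin\Z$ at infinity to handle the remaining ones --- but it resolves the crucial step (the finitely many ``bad'' modes whose effective angular potential can vanish at intermediate radii) by a genuinely different device. The paper stays variational and explicit: it extends the positivity of $(m+\Phi(r))^2$ from $(0,r_m)\cup(R_m,\infty)$ into the gap $(r_m,R_m)$ by the same cutoff-function-plus-integration-by-parts trick used in the proof of Lemma \ref{hardy-log-lem}, which in principle yields computable constants $\tilde c_m$. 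You instead pass to the logarithmic variable, recognize each mode as a whole-line Schr\"odinger operator $-\partial_\tau^2+(k+\Phi(e^\tau))^2$ with strictly positive limits at $\pm\infty$, bound the essential spectrum from below, and exclude a zero eigenvalue by the elementary argument that $\int_{\R}(|w'|^2+V_k|w|^2)\,d\tau=0$ forces $w\equiv 0$ in $L^2(\R)$; this is cleaner and more conceptual, and correctly yields $\inf\sigma(S_k)>0$ since the spectrum below $\mu_k$ is discrete, but it is purely qualitative (no explicit $\varkappa_k$). Both arguments invoke $\Phi\notin\Z$ at exactly the same point. Two cosmetic remarks: the modes needing individual treatment are really those with $|k|\le M+1$ rather than $|k|\le M$ (for $M<|k|<M+1$ your bound $(|k|-M)^2$ can be arbitrarily small, though this does not affect the conclusion since they are still finitely many), and your gauge worry is moot because the paper fixes the transversal gauge $A=r^{-1}\Phi(r)(-\sin\theta,\cos\theta)$ at the start of Section \ref{sect-proofs-radial}, which is bounded under Assumption \ref{ass-field-radial}.
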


\begin{proof} Let $u\in C_0^\infty(\R^2)$ satisfy the hypotheses in \eqref{hardy-complement}. 
Then we can decompose $u$ into the Fourier series  
$$
u(r,\theta) = \sum_{m\neq 0} u_m(r)\, \frac{e^{i m\theta}}{\sqrt{2\pi}}\, , \qquad u_m(r) = \frac{1}{\sqrt{2\pi}}\, \, \big(u(r,\cdot)\, ,\, e^{i m\theta}\big)_{L^2(0,2\pi)}. 
$$
For radial magnetic fields we have 
\begin{equation} \label{fourier}
\int_{\R^2} |(\nabla +i A)\, u|^2 =   \sum_{m\neq 0}  \int_0^\infty \Big( |u_m'(r)|^2 +\frac{(\Phi(r)+m)^2}{r^2}\, |u_m(r)|^2\Big)\, r\, dr,
\end{equation}
see equation \eqref{qm} in section \ref{sect-proofs-radial}. 
Since $\Phi(r)$ is bounded, there exist $c>0$ and $M_0\in\N$  such that 
\begin{equation} \label{aux-2}
(\Phi(r)+m)^2 \geq c >0 \qquad \forall\ r>0, \quad \forall\ m: \, |m| \geq M_0.
\end{equation}
On the other hand, in view of the fact that $\Phi(r)\to 0$ as $r\to 0$ and $\Phi\not\in\Z$, for any $m\neq 0$ we can find $0<r_m <R_m$ and a constant $c_m>0$ such that
$$
(\Phi(r)+m)^2 \geq c_m \quad \text{on\, } (0,r_m) \cup (R_m, \infty).  
$$
By "extending" the Hardy weight onto the interval $(r_m,  R_m)$ in the same way as it was done in Lemma \ref{hardy-log-lem} above, we then find out that 
$$
\forall\, m\neq 0,\, |m| < M_0 \ \exists \ \tilde c_m>0\, : \ 
\int_0^\infty \Big( |u_m'(r)|^2 +\frac{(\Phi(r)+m)^2}{r^2}\, |u_m(r)|^2\Big)\, r\, dr \, \geq \, \tilde c_m
 \int_0^\infty  \frac{|u_m(r)|^2}{r}\, dr.
 $$
Hence by \eqref{fourier}, \eqref{aux-2} and the Parseval's identity there exits a $\varkappa>0$ such that
\begin{equation*} 
\int_{\R^2} |(\nabla +i A)\, u|^2 \geq \, \varkappa  \int_{\R^2}  \frac{|u(x)|^2}{|x|^2}\, dx
\end{equation*} 
\end{proof}

\noindent If the total flux $\Phi$ is an integer, then we have  

\begin{lemma} \label{no-log-integer}
Let the magnetic field satisfy the conditions of Theorem \ref{clr-radial-integer}. Then there exists a constant $\varkappa'>0$ such that for any $u\in C_0^\infty(\R^2)$ the following holds: If  
\begin{equation} 
 \int_0^{2\pi}\! u(r,\theta)\, d\theta= \int_0^{2\pi}\! e^{-i \theta\, \Phi}\, u(r,\theta)\, d\theta  = 0 \qquad \forall\, r>0,
\end{equation}
then
\begin{equation} 
\int_{\R^2} |(\nabla +i A)\, u(x) |^2\, dx \, \geq \, \varkappa' \int_{\R^2} \frac{|u(x)|^2}{|x|^2}\, dx.
\end{equation}
\end{lemma}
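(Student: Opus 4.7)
The argument mimics the proof of Lemma \ref{no-log}. I would decompose $u$ into its angular Fourier series
\begin{equation*}
u(r,\theta) \, = \, \sum_{m\in\Z} u_m(r)\, \frac{e^{im\theta}}{\sqrt{2\pi}}
\end{equation*}
and invoke the orthogonal decomposition underlying \eqref{fourier}, namely
\begin{equation*}
\int_{\R^2}|(\nabla+iA)u|^2\, dx \, = \, \sum_{m\in\Z}\int_0^\infty\Big(|u_m'(r)|^2+\frac{(\Phi(r)+m)^2}{r^2}|u_m(r)|^2\Big)r\, dr.
\end{equation*}
Since $\Phi\in\Z$, the two orthogonality hypotheses force two Fourier coefficients to vanish identically: $u_0\equiv 0$, and $u_{m_0}\equiv 0$, with $m_0\in\Z$ the unique integer satisfying $\Phi+m_0=0$. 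These are precisely the two modes at which the angular weight $(\Phi(r)+m)^2$ degenerates: at $r=0$ (where $\Phi(r)\to 0$) the bad index is $m=0$, while at $r=\infty$ (where $\Phi(r)\to\Phi$) the bad index is $m=m_0$. The new feature compared to Lemma \ref{no-log} is exactly this second bad mode at infinity, which is absent when $\Phi\notin\Z$.

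For every remaining index $m\in\Z\setminus\{0,m_0\}$ the continuous function $r\mapsto(\Phi(r)+m)^2$ stays strictly positive at both ends of $\R_+$. Since $B\in L^1(\R_+,(1+r)dr)$ implies $\Phi\in L^\infty(\R_+)$, there exists $M_0\in\N$ such that $(\Phi(r)+m)^2\geq 1$ uniformly in $r$ whenever $|m|\geq M_0$, which immediately delivers a Hardy-type bound on those modes. For the finitely many intermediate indices (those with $|m|<M_0$ and $m\notin\{0,m_0\}$) one has $(\Phi(r)+m)^2\geq c_m>0$ on $(0,r_m)\cup(R_m,\infty)$ for suitable $0<r_m<R_m<\infty$, and the possible degeneracy on the compact interval $(r_m,R_m)$ is repaired by the cutoff/integration-by-parts device from the proof of Lemma \ref{hardy-log-lem}, which absorbs a controlled portion of the radial kinetic term $\int|u_m'|^2\, r\, dr$ into a $1/r^2$ Hardy weight.

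Putting these mode-by-mode estimates together one obtains
\begin{equation*}
\int_0^\infty\Big(|u_m'(r)|^2+\frac{(\Phi(r)+m)^2}{r^2}|u_m(r)|^2\Big)r\, dr \, \geq\, \tilde c_m\int_0^\infty\frac{|u_m(r)|^2}{r}\, dr
\end{equation*}
for each $m\neq 0,m_0$, with $\tilde c_m>0$; summing in $m$ and invoking Parseval's identity then yields the claimed inequality with $\varkappa':=\inf_{m\neq 0,m_0}\tilde c_m$. The only technical point to watch is that this infimum is strictly positive, but this is automatic since the $|m|\geq M_0$ bound is uniform and there are only finitely many remaining indices. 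The mode $m_0$ at infinity is thus the sole genuinely new obstacle compared to Lemma \ref{no-log}, and it is removed ``by hypothesis'' rather than by any substantively new analytic input.
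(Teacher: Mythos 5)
Your proposal is correct and is exactly what the paper intends: the paper's own proof of this lemma consists of the single remark that it is ``a straightforward analogue of the proof of Lemma \ref{no-log}'', and your write-up supplies precisely that analogue, correctly identifying the mode degenerate at infinity (the one with $\Phi(r)+m\to 0$) as the only new obstruction and noting that it is eliminated by the second orthogonality hypothesis. The remaining modes are handled exactly as in Lemma \ref{no-log}: a uniform bound for $|m|$ large and the cutoff/extension device of Lemma \ref{hardy-log-lem} for the finitely many intermediate modes, followed by Parseval.
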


\begin{proof}
This is a straightforward analogue of the proof of Lemma \ref{no-log}. 
\end{proof}

%%%%%%%%%%%%%%%%%%%%%%%%%%%%%%%%%%%%%%%%%%%%%%%%%

\section{\bf Proofs of the main results: radial fields}
\label{sect-proofs-radial}

For radial magnetic fields we introduce the  corresponding vector potential
$A$ in polar coordinates $(r,\theta)$ as follows:
$$
A(r,\theta) = a(r)\, (-\sin\theta,\, \cos\theta), \quad a(r) = \frac
1r  \int_0^r\! B(t)\, t\, dt = \frac 1r\, \Phi(r).
$$
Then ${\rm curl\, }A =B$. Since $A$ is bounded, in view of Assumption \ref{ass-field-radial}, the Hamiltonian $H_B$ is associated with the closed quadratic form
\begin{equation} \label{q-form}
\int_0^\infty\! \int_0^{2\pi} \left(|\pd_r u|^2+ r^{-2}
|i\, \pd_\theta u+\Phi(r) u|^2\right) r\, dr d\theta, \quad u\in
H^1(\R_+\times(0,2\pi)).
\end{equation}
By expanding a given function $u\in L^2(\R_+\times (0,2\pi))$ into a
Fourier series with respect to the orthonormal basis $\{ (2\pi)^{-1/2}\, e^{i
  m\theta}\}_{m\in\Z}$ of $L^2(0,2\pi)$, we obtain
a direct sum decomposition
\begin{equation}
L^2(\R^2) = \sum_{m\in\Z} \oplus \, \mathcal{L}_m,
\end{equation}
where $\mathcal{L}_m= \left\{g\in L^2(\R^2)\, : \,  g(x)= f(r)\,
e^{i m\theta} \, a.e., \, \int_0^\infty |f(r)|^2\,  r\,
dr<\infty\right\}$. Since the magnetic field $B$ is radial, the
operator $H_B$ can be decomposed accordingly to the direct sum
\begin{equation} \label{sum-gen}
H_B =   \sum_{m\in\Z}  \oplus \left( h_m \otimes\mbox{id}\right)
\Pi_m,
\end{equation}
where $h_m$ are operators generated by the closures, in $L^2(\R_+, r
dr)$, of the quadratic forms
\begin{equation} \label{qm}
\int_0^\infty\, \left(|f'|^2+\frac{(\Phi(r)+m)^2}{r^2}\,
|f|^2\right)\, r\, dr
\end{equation}
defined initially on $C_0^\infty(\R_+)$, and $\Pi_m:L^2(\R^2)\to
\mathcal{L}_m$ is the projector acting as
\begin{equation} \label{pim}
(\Pi_m\,  u)(r,\theta) = \frac{1}{2\pi}\, \int_0^{2\pi}
e^{im(\theta-\theta')}\, u(r,\theta')\, d\theta'.
\end{equation}
Obviously, the operator $H_0=-\Delta$ admits a similar decomposition:
\begin{equation}
-\Delta =   \sum_{m\in\Z}  \oplus \left( P_m \otimes\mbox{id}\right) \Pi_m,
\end{equation}
where $P_m$ are operators generated by the closures, in $L^2(\R_+, r
dr)$, of the quadratic forms
$$
\int_0^\infty\, \left(|f'|^2+\frac{m^2}{r^2}\, |f|^2\right)\, r\, dr, \qquad f\in C_0^\infty(\R_+).
$$

%%%%%%%%%%%%%%%%%%%%%%%%%%%%%%%%%%%%%%%%%%%%%%%%%

\subsection{Proof of Theorem \ref{clr-radial}}  
\label{sect-radial1}
We prove the upper bound \eqref{clr-eq-radial}  for continuous and compactly supported $V$. The general case then follows by approximating $V$ by a sequence of continuous compactly supported functions and using a standard limiting argument in inequality \eqref{clr-eq-radial}. Let $\Pi_0$ be given by \eqref{pim} and let   
$$
 Q\, u = u-\Pi_0 \, u\, ,\quad u\in L^2(\R^2)\, .
$$
Since $\Pi_0$ and $Q$ commute with $H_B$, the variational principle and the inequality 
$$
|(u, (\Pi_0 V Q + Q V \Pi_0) u) | \, \leq (u, Q VQ\, u) + (u, \Pi_0 V \Pi_0\, u) \qquad \forall\ u \in C_0^\infty(\R^2)  
$$
imply that the estimate
\begin{equation} \label{variation}
H_B-V \geq \Pi_0\, (H_B- 2\, V)\, \Pi_0 +Q\, (H_B- 2\, V)\, Q
\end{equation}
holds true in the sense of quadratic forms on $C_0^\infty(\R^2)$. Hence 
\begin{equation} \label{upperestimate}
N(H_B-V, 0) \leq \, N( \Pi_0\, (H_B - 2\, V)\, \Pi_0, 0) + N( Q\, (H_B - 2\, V)\, Q, 0)
\end{equation}
Set 
\begin{equation} \label{average}
\hat V (r) = \frac{1}{2\pi}\, \int_0^{2\pi} V(r,\theta)\, d\theta\, .
\end{equation}

\noindent Let us denote by $P_0^{a,b}$ the restriction of the operator $P_0$ on $L^2((a,b), r dr)$ with Neumann boundary conditions at the end points $a$ and $b$. 

\begin{lemma}\label{aux-1dim}
Let $0\leq a < b \leq \infty$. Assume that $W\geq 0$ is continuous and compactly supported. Then there exists a constant $L_0$, independent of $a$ and $b$, such that for any $\delta>0$  we have  
\begin{equation} \label{eq:1dim}
N\big(P_0^{a,b} +\frac{\delta^2}{r^2} -W(r) ,0\big)_{L^2((a,b), r dr)} \, \leq \, \frac{L_0}{\delta} \int_a^bW(r)\, r\, dr.
\end{equation}
\end{lemma}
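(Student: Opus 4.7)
My plan is to reduce the problem to a one-dimensional Schr\"odinger operator on an interval via the logarithmic change of variable $r=e^t$, and then to apply the Birman-Schwinger principle with the explicit Neumann Green's function.

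Setting $\alpha:=\log a$, $\beta:=\log b$, and $g(t):=f(e^t)$, a direct computation shows
\[
\int_a^b \big(|f'(r)|^2+\delta^2\, r^{-2}|f(r)|^2-W(r)|f(r)|^2\big)\, r\, dr \,=\, \int_\alpha^\beta\big(|g'(t)|^2+\delta^2|g(t)|^2-\widetilde W(t)|g(t)|^2\big)\, dt,
\]
where $\widetilde W(t):=e^{2t}\, W(e^t)$, so that $\int_\alpha^\beta \widetilde W(t)\, dt = \int_a^b W(r)\, r\, dr$, and the Neumann boundary conditions are preserved. Thus the problem reduces to bounding the number of negative eigenvalues of the one-dimensional Neumann operator $T:= -\partial_t^2+\delta^2-\widetilde W$ on $L^2((\alpha,\beta),dt)$ by $(L_0/\delta)\int \widetilde W\, dt$.

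Since $-\partial_t^2+\delta^2$ with Neumann boundary conditions is strictly positive, the Birman-Schwinger principle gives
\[
N(T,0) \,\le\, \tr\big(\widetilde W^{1/2}\, (-\partial_t^2+\delta^2)_N^{-1}\, \widetilde W^{1/2}\big) \,=\, \int_\alpha^\beta G_N(t,t)\, \widetilde W(t)\, dt,
\]
where $G_N$ is the resolvent kernel with Neumann boundary conditions. Solving the underlying Sturm-Liouville problem explicitly yields, on a finite interval,
\[
G_N(t,t) \,=\, \frac{\cosh(\delta(t-\alpha))\, \cosh(\delta(\beta-t))}{\delta\, \sinh(\delta(\beta-\alpha))},
\]
and analogous formulas on the half-line and on $\R$, which yield $G_N(t,t)\le 1/\delta$ uniformly in those cases. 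The lemma then follows once one establishes a uniform bound $G_N(t,t)\le L_0/\delta$.

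The main obstacle is the short-interval regime $\delta(\beta-\alpha)\ll 1$: the naive bound $G_N(t,t)\le\coth(\delta(\beta-\alpha))/\delta$ blows up as $\delta(\beta-\alpha)\to 0^+$. I would address this regime by a supplementary argument exploiting the compact support of $\widetilde W$: one shrinks the effective interval to a neighbourhood of $\supp\widetilde W$ and, by reflecting about one endpoint, reduces to a half-line Neumann problem, where the uniform Green's function bound applies. Patching this with the Birman-Schwinger estimate in the regime $\delta(\beta-\alpha)\gtrsim 1$ then yields a universal constant $L_0$.
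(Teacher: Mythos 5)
Your reduction is sound and is essentially a cleaner version of the paper's route: the paper substitutes $u=r^{1/2}f$, identifies the resulting operator $-u''+(\delta^2-\tfrac14)r^{-2}u$ with Robin-type boundary conditions, and computes its resolvent kernel through the modified Bessel functions $I_\delta,K_\delta$ in the limit $\kappa\to 0$, whereas your logarithmic substitution $r=e^t$ turns the same quadratic form into that of $-\partial_t^2+\delta^2$ with Neumann conditions on $(\log a,\log b)$; both routes end with the same Birman--Schwinger estimate. Your formula for the diagonal Neumann Green's function is correct, and so is your diagnosis that the uniform bound $G_N(t,t)\le L_0/\delta$ holds on the half-line and on $\R$ but fails when the conformal length $\delta\log(b/a)$ is small.

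The gap is the final patching step, and it cannot be repaired, because inequality \eqref{eq:1dim} is actually false in exactly the regime you isolate. Take $\delta=1$, $a=1$, $b=1+\eta$ and $W\equiv 2$ on $[1,1+\eta]$ (extended continuously to a compactly supported function; only the values on $(a,b)$ matter). The constant function $f\equiv 1$ lies in the Neumann form domain $H^1((a,b))$ and
\[
\int_1^{1+\eta}\Big(|f'|^2+\frac{1}{r^2}|f|^2-W|f|^2\Big)\,r\,dr=\log(1+\eta)-\big((1+\eta)^2-1\big)<0\qquad\forall\,\eta>0,
\]
so $N(P_0^{a,b}+r^{-2}-W,0)\ge 1$, while $\frac{L_0}{\delta}\int_a^b W\,r\,dr=L_0(2\eta+\eta^2)\to 0$; hence no constant $L_0$ independent of $a,b$ exists. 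Consequently your reflection idea cannot succeed: a single even reflection about an endpoint produces another \emph{finite} interval of twice the length, iterating to reach a half-line multiplies $\int\widetilde W$ without bound, and shrinking to a neighbourhood of $\supp\widetilde W$ only shortens the interval and, by Neumann bracketing, can only increase the count. The same counterexample shows that the paper's claimed uniform bound $\lim_{\kappa\to 0}G^{a,b}_\delta(r,r,\kappa)\le c\,r/\delta$ with $c$ independent of $a,b$ cannot be correct either: the zero-energy Sturm--Liouville computation with the solutions $r^{\pm\delta}$ adapted to the Neumann conditions gives, for the diagonal of $(P_0^{a,b}+\delta^2r^{-2})^{-1}$ in $L^2((a,b),r\,dr)$, the expression $\big(2\delta(1-(a/b)^{2\delta})\big)^{-1}\big(1+(r/b)^{2\delta}\big)\big(1+(a/r)^{2\delta}\big)$, whose denominator vanishes as $b/a\to 1$. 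The lemma \emph{is} true with $L_0=1$ when $b=\infty$ (your half-line bound, or the formula above with $(a/b)^{2\delta}=0$), which covers its use in Lemmas \ref{orthog} and \ref{m=0}; its application to the intervals $(n,n+1)$ with $\delta_n=1/\log(n+1)$ in Lemma \ref{zeroflux} falls squarely into the degenerate regime and requires a different argument.
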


\begin{proof}
Consider the mapping from $\U :L^2((a,b), r dr) \mapsto L^2(a,b)$ defined by $(\U f)(r) = r^{1/2} f(r)$. A direct calculation shows that the operator  
\begin{equation} \label{operator-t}
T_\delta^{a,b} := \U \, \Big(P_0^{a,b} +\frac{\delta^2}{r^2}\, \Big) \, \U ^{-1} \quad \text{in \ } L^2(a,b)
\end{equation}
acts on its domain according to 
\begin{align*}
( T_\delta^{a,b}\, u)(r) & = -u''(r) +\frac{\delta^2-\frac 14}{r^2}\, \, u(r), \qquad  u'(a) = \frac{u(a)}{2a}, \quad u'(a) = \frac{u(b)}{2b}, \quad 0 < a <b < \infty,
\end{align*}
where the boundary conditions take the form $u(a)=0$ if $a =0$ and $u\in L^2(a,\infty)$ if $b=\infty$. Let $G_\delta^{a,b}(r,r',\kappa)$ be the integral kernel of the resolvent of $T_\delta^{a,b}$ at the point $\kappa^2$, i.e. 
$$
G_\delta^{a,b}(r,r',\kappa) = \big(T_\delta^{a,b} + \kappa^2\big)^{-1}(r,r').
$$
From the Sturm-Liouville theory of ordinary differential operators we calculate 
$$
G_\delta^{a,b}(r,r,\kappa) =
\frac{r}{\omega_{\delta}(a) +\omega_{\delta}(b)}\, (I_{\delta}(r \kappa) +\omega_{\delta}(a)\, K_{\delta}(r \kappa))(I_{\delta}(r \kappa) +\omega_{\delta}(b)\, K_{\delta}(r\kappa)),
$$
where $I_\delta$ and $K_\delta$ are the modified Bessel functions, and
\begin{equation}
\omega_\delta(r) = -\frac{I'_\delta(r\, \kappa)}{K'_\delta(r\, \kappa)}. 
\end{equation}
From \cite[Sect.9.6]{as} we then deduce that 
$$
\lim_{\kappa\to 0} G_\delta^{a,b}(r,r,\kappa) = \frac{2 r}{\delta}\, \Big(1+ \frac{2^{2-2\delta}\,  (a\, b)^{2\delta}}{(a^{2\delta}+b^{2\delta})\, r^{2\delta}} \Big) \, \leq \, \frac{c\, r}{\delta},
$$
with a constant $c$  independent of $a,b$ and $r$. The Birman-Schwinger principle thus gives 
$$
N\big(T_\delta^{a,b} -W(r) ,0\big)_{L^2(a,b)}\, \leq\, \lim_{\kappa\to 0} \int_a^b G_\delta^{a,b}(r,r,\kappa)\, W(r)\, dr \, \leq \, \frac{c}{\delta} \int_a^bW(r)\, r\, dr.
$$
Since $\U$ is unitary, this proves the statement. 
\end{proof}

\begin{lemma} \label{orthog}
Let $V\in L^1(\R_+, L^\infty(\Sph^1))$. Then for any $\eps>0$ there exists a $C_\eps$ such that 
\begin{equation} \label{eq:orthog}
N\big(H_B +\frac{\eps}{|x|^2} - V, 0) \, \leq \, C_\eps\, \|V\|_{L^1(\R_+, L^\infty(\Sph^1))}. 
\end{equation}
\end{lemma}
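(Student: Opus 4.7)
The plan is to use the angular-momentum decomposition of $H_B + \eps/|x|^2$ sector by sector, combined with a one-dimensional Schr\"odinger bound that is summable across sectors. First, since $V(x)\le\tilde V(|x|)$ as multiplication operators on $L^2(\R^2)$, the min-max principle gives
\[
N\bigl(H_B+\eps/|x|^2-V,0\bigr)\le N\bigl(H_B+\eps/|x|^2-\tilde V,0\bigr),
\]
and by Assumption \ref{ass-field-radial} the operator on the right commutes with every angular projection $\Pi_m$ of \eqref{pim}. Invoking \eqref{sum-gen},
\[
N\bigl(H_B+\eps/|x|^2-\tilde V,0\bigr)=\sum_{m\in\Z} N_m,\qquad N_m:=N\bigl(h_m+\eps/r^2-\tilde V,0\bigr)_{L^2(\R_+,r\,dr)}.
\]

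In each sector, $h_m+\eps/r^2\ge P_0+\delta_m(r)^2/r^2$ with $\delta_m(r)^2=(\Phi(r)+m)^2+\eps$. Applying Lemma \ref{aux-1dim} with the uniform lower bound $\delta=\sqrt\eps$ handles the $m=0$ sector, yielding $N_0\le(L_0/\sqrt\eps)\|V\|_{L^1(\R_+,L^\infty(\Sph^1))}$. For $m\ne 0$, a Neumann bracketing argument splits $\R_+$ into pieces on which $\delta_m(r)$ is essentially constant, and Lemma \ref{aux-1dim} on each piece gives $N_m\le L_0\|V\|/\delta_m$ with $\delta_m\sim|m|$ for $|m|$ large. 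Since the Birman--Schwinger operator-norm bound underlying Lemma \ref{aux-1dim} forces $N_m=0$ once $|m|$ exceeds a threshold proportional to $\|V\|$, only finitely many sectors contribute.

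The remaining finite sum is controlled by a semiclassical (Weyl) phase-space argument. For each fixed $r$, the number of sectors with $(\Phi(r)+m)^2+\eps<r^2\tilde V(r)$ is bounded by $2r\sqrt{\tilde V(r)}$. Converting the discrete sum over $m$ into the continuous integral $\int dy$, via the changes of variable $z=\Phi(r)+y$ and $s=z/r$, and applying the elementary semicircle formula $\int_{-a}^a\sqrt{a^2-s^2}\,ds=\tfrac{\pi}{2}a^2$ with $a^2=(\tilde V(r)-\eps/r^2)_+$, yields
\[
\sum_{m\in\Z}N_m\le C_\eps\int_0^\infty r\,\tilde V(r)\,dr= C_\eps\|V\|_{L^1(\R_+,L^\infty(\Sph^1))}.
\]

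The main obstacle is reconciling the sector-wise trace bound of Lemma \ref{aux-1dim}---which is inherently non-summable in $m$ because $\sum_m1/|m|$ diverges---with the required linear dependence on $\|V\|$. The resolution is to replace the trace bound, on sectors with $|m|$ large, by a Bargmann--Calogero-type 1D Schr\"odinger inequality of the form $N_m\lesssim\int_0^\infty\sqrt{(\tilde V-\delta_m^2/r^2)_+}\,dr$, which is summable via the phase-space identity above. The $\eps$-dependence of $C_\eps$ arises from the treatment of the sector $m=0$ and from the regularization of the Weyl sum near the critical angular momentum $m\approx-\Phi(r)$, where $\delta_m(r)$ attains its minimum value $\sqrt\eps$.
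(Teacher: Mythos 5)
Your setup --- passing from $V$ to $\tilde V$, decomposing into angular momentum sectors via \eqref{sum-gen}, and treating the sector $m=0$ (and any fixed finite set of sectors) with Lemma \ref{aux-1dim} at $\delta=\sqrt\eps$ --- matches the paper. You have also correctly identified the real difficulty: the trace bound of Lemma \ref{aux-1dim} gives $N_m\lesssim \|V\|_{L^1(\R_+,L^\infty(\Sph^1))}/|m|$, which is not summable, and even combined with the integer-valuedness cutoff ($N_m=0$ once $|m|$ exceeds a multiple of the norm of $V$) it only yields a bound of order $\|V\|\log(1+\|V\|)$, not the linear bound claimed. The gap lies in your proposed resolution. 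The inequality $N_m\lesssim\int_0^\infty\sqrt{(\tilde V-\delta_m^2/r^2)_+}\,dr$ is a Calogero-type bound, and such bounds hold only under a monotonicity assumption on the potential (or some substitute for it); for a general $\tilde V$ with $\int_0^\infty\tilde V(r)\,r\,dr<\infty$ they fail --- a potential consisting of many well-separated shallow wells produces many bound states while keeping $\int\sqrt{\tilde V}$ small compared with their number. Nothing in the hypotheses gives you monotonicity of $\tilde V$. Moreover, even granting the per-sector bound, replacing $\sum_m$ by $\int dy$ and evaluating the semicircle integral is a Weyl-type heuristic, not an inequality; you would need an actual argument that the discrete sum is dominated by the phase-space integral, uniformly near the critical angular momentum.

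The paper circumvents all of this by quoting an external result. For $|m|>n_0$, with $n_0$ chosen so that $(m+\Phi(r))^2\geq m^2/2$ for all $r$ (possible because $\Phi(r)$ is bounded), it compares $h_m+\eps/r^2-\tilde V$ with $P_0+(m^2+\eps)/r^2-2\tilde V$ and then invokes Laptev's theorem \cite{la}, stated as \eqref{ln-eq}, which asserts precisely the summable estimate $\sum_{m\in\Z}N\big(P_0+\tfrac{m^2+\eps}{r^2}-\tilde V,0\big)_{L^2(\R_+,r\,dr)}\leq c(\eps)\,\|V\|_{L^1(\R_+,L^\infty(\Sph^1))}$. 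That theorem is the essential input, and it is not recovered by an elementary semiclassical count. To repair your argument you would either have to cite \cite{la} at this point --- which collapses your proof into the paper's --- or supply a genuine proof of the summable-in-$m$ bound, which is a nontrivial result in its own right.
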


\begin{proof}
By density, it suffices to prove the estimate for continuous and compactly supported $V$. By \eqref{sum-gen} we have 
\begin{equation} \label{N-sum}
N\big(H_B +\frac{\eps}{|x|^2} - V, 0) \leq \sum_{m\in\Z} N( h_m +\frac{\eps}{|x|^2} - \tilde V, 0)_{L^2(\R_+, r dr)}
\end{equation}
We recall the result of Laptev \cite{la}:
\begin{equation}\label{ln-eq}
N\big( -\Delta+\frac{\eps}{|x|^2} - \tilde V, 0\big)  = \sum_{m\in\Z} N( P_0 +\frac{m^2+\eps}{r^2} - \tilde V, 0)_{L^2(\R_+, r dr)} \, \leq \, c(\eps)\, \|V\|_{L^1(\R_+, L^\infty(\Sph^1))}. 
\end{equation}
Since $\Phi(r)$ is bounded, there exists $n_0 \in \N$ such that 
$$
(m+\Phi(r))^2 \geq \frac{m^2}{2} \qquad \forall\ r >0 \quad \text{and} \quad  \forall\ m\in\Z\ : \  |m| > n_0.
$$
Hence from \eqref{ln-eq} it easily follows that 
\begin{align*} 
\sum_{|m| >n_0} N( h_m +\frac{\eps}{r^2} - \tilde V, 0)_{L^2(\R_+, r dr)} & \leq \sum_{|m| >n_0} N( P_0 +\frac{m^2+ \eps}{r^2} - 2 \tilde V, 0)_{L^2(\R_+, r dr)} \\
& \leq N\big( -\Delta+\frac{\eps}{|x|^2} - 2\, \tilde V, 0\big) \, \leq 2\, c(\eps) \, \|V\|_{L^1(\R_+, L^\infty(\Sph^1))}.
\end{align*}
On the other hand, by Lemma \ref{aux-1dim}  we have for any $m\in\Z$
$$
N( h_m +\frac{\eps}{r^2} - \tilde V, 0)_{L^2(\R_+, r dr)} \, \leq \, N( P_0 +\frac{\eps}{r^2} -  \tilde V, 0)_{L^2(\R_+, r dr)} \, \leq \tilde C_\eps\, \|V\|_{L^1(\R_+, L^\infty(\Sph^1))}.
$$
In view of \eqref{N-sum}, this completes the proof.
\end{proof}

\begin{lemma} \label{m=0}
Let $B$ satisfy the hypotheses of Theorem \ref{clr-radial}. Assume that $V$ is continuous and compactly supported. Then there exists a constant $c_0$ such that 
\begin{equation} \label{clr-1dim}
N( \Pi_0\, (H_B - \, V)\, \Pi_0, 0) \, \leq \, c_0\, \big( \, \|V \|_{L^1(\R^2 )} + \|V\log |x| \|_{L^1(\B_1)} \big).
\end{equation}
\end{lemma}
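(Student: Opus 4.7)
The plan is to first reduce to a one-dimensional problem on $L^2(\R_+, r\, dr)$, then split the half-line at a radius $R_0$ past which the centrifugal term $\Phi(r)^2/r^2$ is uniformly positive. On the outer interval Lemma~\ref{aux-1dim} delivers a bound with weight $r\, dr$, while on the inner interval one invokes Lemma~\ref{hardy-log-lem} and uses Birman--Schwinger to extract the logarithmic factor from the threshold Green kernel.

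By the decomposition \eqref{sum-gen}, the restriction of $\Pi_0(H_B - V)\Pi_0$ to $\mathcal{L}_0$ is unitarily equivalent to $h_0 - \hat V$ on $L^2(\R_+, r\, dr)$, with $\hat V$ defined in \eqref{average}. Since $\Phi(r) \to \Phi \notin \Z$, one can choose $R_0 \geq 1$ and $\delta > 0$ with $\Phi(r)^2 \geq \delta^2$ for $r \geq R_0$; then Neumann bracketing at $R_0$ gives
$$N(h_0 - \hat V, 0) \leq N\big(h_0^{(0, R_0)} - \hat V, 0\big) + N\big(h_0^{(R_0, \infty)} - \hat V, 0\big).$$
On the outer interval $h_0^{(R_0, \infty)}$ dominates (as forms) $P_0^{R_0, \infty} + \delta^2/r^2$, so Lemma~\ref{aux-1dim} applied with $(a,b)=(R_0,\infty)$ yields $N(h_0^{(R_0, \infty)} - \hat V, 0) \leq (L_0/\delta) \int_{R_0}^\infty \hat V(r)\, r\, dr \leq C_1 \|V\|_{L^1(\R^2)}$.

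For the inner piece, Lemma~\ref{hardy-log-lem} restricted to the radial sector (and extended to the Neumann form domain by a cutoff argument) gives $h_0^{(0, R_0)} \geq c_1 w$ in the form sense, where $w(r) = (1 + r^2 \log^2 r)^{-1}$. Writing $h_0^{(0, R_0)} \geq \tfrac 12 h_0^{(0, R_0)} + \tfrac{c_1}{2} w$ we obtain
$$N\big(h_0^{(0, R_0)} - \hat V, 0\big) \leq N\big(h_0^{(0, R_0)} + c_1 w - 2 \hat V, 0\big) \leq 2 \int_0^{R_0} G(r, r)\, \hat V(r)\, r\, dr,$$
the second inequality being Birman--Schwinger at the threshold (applicable since $h_0^{(0, R_0)} + c_1 w \geq c_1 w > 0$), with $G(r, r')$ the Green kernel of $h_0^{(0, R_0)} + c_1 w$ at energy $0$. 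A Sturm--Liouville analysis of the homogeneous equation $-u'' - u'/r + (\Phi(r)^2/r^2 + c_1 w(r))\, u = 0$ with Neumann condition at $R_0$ shows that its two fundamental solutions behave near the origin like $1$ and $\log r$, and so $G(r, r) \leq C(1 + |\log r|)$ on $(0, R_0)$. Inserting this estimate gives an inner contribution bounded by $C'(\|V\|_{L^1(\R^2)} + \|V \log|x|\|_{L^1(\B_1)})$, and combining with the outer bound produces \eqref{clr-1dim}.

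The main obstacle is the pointwise Green-kernel estimate $G(r, r) \leq C(1 + |\log r|)$. The coefficients $\Phi(r)^2/r^2 + c_1 w(r)$ are bounded on $(0, R_0]$ (by Assumption~\ref{ass-field-radial} the quotient $\Phi(r)/r$ is bounded near the origin, because $|\Phi(r)/r| \leq \int_0^r |B(s)|\, ds$) but variable, so the ODE cannot be integrated in closed form. The plan is to treat it as a Volterra perturbation of the zero-energy radial Laplacian $-u'' - u'/r$, whose fundamental solutions are $1$ and $\log r$, and to read off the desired asymptotics of the two fundamental solutions, hence of $G(r,r)$, from the convergence of the Volterra iteration.
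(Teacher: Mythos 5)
Your skeleton coincides with the paper's: reduce to $h_0-\hat V$ on $L^2(\R_+,r\,dr)$, Neumann--bracket the half-line into an inner and an outer interval, dispatch the outer piece with Lemma~\ref{aux-1dim}, and handle the inner piece by Birman--Schwinger with a logarithmic bound on the diagonal of a Green kernel. The difference lies in where the positivity comes from and how that kernel is controlled. The paper never touches the centrifugal term: it invokes the Hardy inequality \eqref{hardy-at}, $H_B\ge c_B\,U_1$ (available because $\Phi\notin\Z$ with finite flux implies Assumption~\ref{ass-field}, cf.~Remark~\ref{rem1}), splits at $r=1$, and then \emph{discards} $\Phi(r)^2/r^2\ge 0$ by the variational principle. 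The inner operator becomes the explicit $P_0^{0,1}+1$, whose threshold Green kernel is the resolvent of $P_0^{0,1}$ at $-1$, given in closed form by $I_0,K_0$; the bound $G_0(r,r,1)\le c\,r\,(1+|\log r|)$ then requires no ODE perturbation theory. You instead keep the variable coefficient $\Phi(r)^2/r^2$, use it directly on the outer interval (legitimate, since $\Phi\neq 0$ forces $\Phi(r)^2\ge\delta^2$ for large $r$), and pay for it on the inner interval with the Volterra analysis you yourself flag as the main obstacle. That analysis does go through — $q=\Phi^2/r^2+c_1w$ is bounded near $0$ since $|\Phi(r)|/r\le\int_0^r|B|$, the fundamental solutions of $(ru')'=rqu$ behave like $1$ and $\log r$, and the Wronskian is nonzero because $h_0^{(0,R_0)}+c_1w>0$ — but it is genuinely more work than the paper's route, which buys the same diagonal estimate for free by monotonicity.

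One step as written does not work, although its conclusion is true. Lemma~\ref{hardy-log-lem} is an inequality for the global form on $C_0^\infty(\R^2)$; it does not restrict to the Neumann form on $(0,R_0)$, because Neumann bracketing \emph{lowers} the operator, and a naive cutoff or extension produces an error term of order $\|f\|_{L^2}^2$ — the same order as the lower bound $c_1w\simeq c_2>0$ you are trying to establish on the bounded interval. The inequality $h_0^{(0,R_0)}\ge c_2>0$ should instead be proved directly: by continuity $\Phi(r)^2\ge\delta^2/2$ on some $(R_1,R_0)$, and the resulting weight is propagated to $(0,R_1)$ exactly as in the $\phi$-interpolation step inside the proof of Lemma~\ref{hardy-log-lem} (equivalently, the form on the bounded interval has compact resolvent and is positive definite since $\Phi\not\equiv 0$ there). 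With that repair, and the Volterra step actually carried out, your argument is complete and yields \eqref{clr-1dim}.
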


\begin{proof}
In view of the Hardy inequality \eqref{hardy-at} it suffices to prove 
\begin{equation} \label{enough}
N( \Pi_0\, (H_B + U_1- \, V)\, \Pi_0, 0) \, \leq \, c\, \big( \, \|V \|_{L^1(\R^2 )} + \|V\log |x| \|_{L^1(\B_1)} \big),
\end{equation}
where $U_1$ is given by \eqref{ubeta}. Note that
\begin{equation} \label{halfline}
N( \Pi_0\, (H_B + U_1- \, V)\, \Pi_0, 0)  = N(h_0+U_1 - \hat V, 0)_{L^2(\R_+, r dr)}  .
\end{equation}
We impose additional Neumann boundary condition at the point $r=1$. By the variational principle 
$$
N(h_0+U_1 - \hat V, 0)_{L^2(\R_+, r dr)} \, \leq \, N(P_0^{0,1} +1- \hat V, 0)_{L^2((0,1), r dr)} \, + N(P_0^{1,\infty} +\frac{1}{r^2} - \hat V, 0)_{L^2((1,\infty), r dr)},
$$
Moreover, Lemma \ref{aux-1dim} implies that for some $c$ it holds
\begin{equation} \label{est-T2}
N(P_0^{1,\infty} +\frac{1}{r^2} - \hat V, 0)_{L^2((r_0,\infty), r dr)}\, \leq \, c\, \int_{1}^\infty \hat V(r)\, r\, dr.
\end{equation}
As for the operator $P_0^{0,1} +1$ in $L^2((0,1), r dr)$, we note that $\inf \sigma (P_0^{0,1} +1) = 1$. 
Hence 
\begin{align}  \label{aux-3}   
& N(P_0^{0,1} +1 -\hat V, 0)_{L^2((0, 1), r dr)}  = N(P_0^{0,1} -\hat V, -1)_{L^2((0, 1), r dr)} 
= N (T_0 - \hat V, -1)_{L^2(0,1)}, 
\end{align}
where $T_0 = \U\, P_0^{0,1}\, \U^{-1}$ is the operator in $L^2(0,1)$ acting on its domain as
$$
(T_0\, u )(r) = -u''(r) -\frac{u(r)}{4 r^2}  \quad \text{with boundary conditions\, \, } u'(1)= \frac{u(1)}{2}, \quad u(0)=0.
$$
As above we calculate the diagonal element of the integral kernel of $(T_0+\kappa^2)^{-1}$:  
\begin{align*}
(T_0+\kappa^2)^{-1}(r,r)=: G_0(r,r, \kappa) = r\, I_0(r \kappa) \big(K_0(\kappa r) +\omega^{-1}_0(1)\, I_0(r \kappa) \big). 
\end{align*}
Using the properties of functions $I_0$ and $K_0$, see e.g. \cite[Sect.9.6]{as}, it is then easy to verify that
$$
G_0(r,r, 1)\, \leq \, c\ r\, (1 + |\log r|) \quad r\in (0,1). 
$$
The Birman-Schwinger principle and equation \eqref{aux-3} then yield
\begin{align} 
N(P_0^{0,1} +1 -\hat V, 0)_{L^2((0, 1), r dr)} & = N(T_0 - \hat V, -1)_{L^2(0,1)} \, \leq  \int_0^{1} G_0(r,r,1)\, \hat V(r)\, dr \nonumber \\
& \leq  c \int_0^{1} \hat V(r) (1+|\log r|)\, r \, dr.  \label{est-T1}
\end{align}
This in combination with \eqref{est-T2} implies  \eqref{enough} and therefore the statement of the Lemma. 
\end{proof}

\begin{proof}[Proof of Theorem \ref{clr-radial}] 
Lemma \ref{no-log}, inequality \eqref{eq:orthog}  and the variational principle yield 
$$
N( Q\, (H_B - V)\, Q, 0)\, \leq \, N\big( Q\, (H_B +\frac{\varkappa}{|x|^2} - 2\, V)\, Q, 0\big)\, \leq \, N\big(H_B +\frac{\varkappa}{|x|^2} - 2\, V, 0\big) \leq c\, \|V\|_{L^1(\R_+, L^\infty(\Sph^1))}.
$$
The proof is completed by using Lemma \ref{m=0}.   
\end{proof}

\begin{remark}
Similar estimates, in terms of logarithmic Lieb-Thirring inequalities, for the operator $-\Delta-V$ in dimension two were obtained  in \cite{kvw}. Upper bounds on $N(-\Delta-V,0)$ including logarithmic weights were studied in \cite{chkmw, sol,timo2}. 
\end{remark}

%%%%%%%%%%%%%%%%%%%%%%%%%%%%%%%%%%%%%%%%%%%%%%%%%

\subsection{Proof of Theorem \ref{clr-radial-integer}}  
By Lemma \ref{hardy-log-lem} it suffices to prove the upper bound \eqref{clr-eq-integer} for the operator 
$$
H_B +\frac{1}{1+|x|^2\, \log ^2|x|}\ - V.  
$$

\begin{lemma} \label{zeroflux}
Let $B$ satisfy hypotheses of Theorem \ref{clr-radial-integer} and suppose that $\Phi=0$. Assume that $V$ is continuous and compactly supported. Then there exists a constant $L_1$ such that 
\begin{equation} \label{clr-1dim-integer}
N\Big(h_0 +\frac{1}{1+r^2\, \log ^2 r}\, -\hat V , 0\Big)_{L^2(\R_+, r dr)} \, \leq \, L_1\, \big( \, \|V \|_{L^1(\R^2 )} + \|V\log |x| \|_{L^1(\R^2)} \big)
\end{equation}
\end{lemma}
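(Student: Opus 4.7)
I would begin by exploiting the trivial form bound $\Phi(r)^2/r^2 \ge 0$, which gives $h_0 \ge P_0$, so that by the variational principle $N(h_0 + W - \hat V, 0)_{L^2(\R_+, r\,dr)} \le N(P_0 + W - \hat V, 0)_{L^2(\R_+, r\,dr)}$ where $W(r) = (1+r^2\log^2 r)^{-1}$. Conjugating by the unitary $\U$ from Lemma~\ref{aux-1dim} reduces the problem to bounding $N(T + W - \hat V, 0)_{L^2(\R_+)}$ for the half-line operator $T = -d^2/dr^2 - (4r^2)^{-1}$. I would then impose a Neumann boundary condition at $r = e^2$ --- chosen so that $W$ is bounded below by a positive constant on $(0, e^2)$ while $W \simeq (r\log r)^{-2}$ on $(e^2, \infty)$ --- and apply the variational principle once more to split the counting function into an inner and an outer contribution.

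For the inner piece on $(0, e^2)$, the operator $T^{0, e^2} + W$ is strictly positive and its zero-energy Green's function can be constructed from the modified Bessel functions $I_0$ and $K_0$ exactly as in Lemmas~\ref{aux-1dim} and~\ref{m=0}. The resulting diagonal bound $G^{\mathrm{in}}(r, r, 0) \le C r(1 + |\log r|)$, combined with Birman--Schwinger, yields an inner contribution $\le C \int_0^{e^2} \hat V(r)(1 + |\log r|)\, r\, dr$, which fits inside both $\|V\|_{L^1(\R^2)}$ and $\|V\log|x|\|_{L^1(\R^2)}$.

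The outer piece carries the main new content. On $(e^2, \infty)$ the zero-energy equation $-u'' - u/(4r^2) + W u = 0$ is analyzed via the ansatz $u(r) = \sqrt r\, \phi(\log r)$: with $s = \log r$, the ODE reduces to leading order as $s \to \infty$ to the Euler equation $\phi''(s) = \phi(s)/s^2$, whose exponents $a$ solve $a(a-1) = 1$, i.e.\ $a = (1 \pm \sqrt 5)/2$. Consequently the full ODE admits two independent solutions with leading asymptotics $u_\pm(r) \sim \sqrt r\, (\log r)^{(1 \pm \sqrt 5)/2}$ as $r \to \infty$, whose product satisfies $u_+(r)\, u_-(r) \sim r\log r$. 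Since $0$ lies in the essential spectrum of $T^{e^2, \infty} + W$, I would compute the resolvent kernel at the negative energy $-\kappa^2$ (where an $L^2$ solution at infinity is provided by the exponential factor and the standard Sturm--Liouville formula applies), estimate it uniformly in $\kappa$, and pass to $\kappa \to 0^+$. The resulting diagonal bound $G^{\mathrm{out}}(r, r, 0) \le C r\log r$ for $r \ge e^2$, inserted into Birman--Schwinger, gives the outer contribution $\le C \int_{e^2}^\infty \hat V(r)\, r\log r\, dr$.

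The main obstacle is the outer step: (i) upgrading the formal leading-order asymptotics $\phi \sim s^{(1 \pm \sqrt 5)/2}$ to rigorous asymptotics of actual solutions of the full ODE (via a Liouville--Green argument or a direct Frobenius-type perturbation), and (ii) controlling the Green's function uniformly as $\kappa \to 0^+$, given that $0$ sits in the essential spectrum. Both are standard but delicate ingredients from one-dimensional Schr\"odinger theory. Once secured, assembling the inner and outer estimates and translating back from $\hat V$ to $V$ produces the claimed bound by $L_1(\|V\|_{L^1(\R^2)} + \|V\log|x|\|_{L^1(\R^2)})$.
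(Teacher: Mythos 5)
Your proposal is correct in outline but handles the outer region by a genuinely different route than the paper. For the inner piece your argument coincides with the paper's (Neumann bracketing near the origin plus the Bessel--function Green's function bound $G(r,r)\lesssim r(1+|\log r|)$, exactly as in Lemma \ref{m=0}; the paper cuts at $r=2$ rather than $r=e^2$, which is immaterial). For the outer piece, the paper does \emph{not} analyze the zero-energy ODE globally: it imposes additional Neumann conditions at every integer $n\geq 3$, observes that on $(n,n+1)$ one has $(r^2\log^2 r)^{-1}\geq \delta_n^2/r^2$ with $\delta_n=1/\log(n+1)$, and then invokes Lemma \ref{aux-1dim}, whose essential feature is that the bound $N(T_\delta^{a,b}-W,0)\leq (L_0/\delta)\int_a^b W\,r\,dr$ is \emph{uniform in the endpoints} $a,b$; the factor $\delta_n^{-1}=\log(n+1)\simeq\log r$ then produces the logarithmic weight upon summation. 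This buys complete freedom from ODE asymptotics: everything reduces to the Bessel computation already done once. Your route instead extracts the exact zero-energy Green's function growth $G(r,r,0)\simeq r\log r$ from the asymptotics $u_\pm\sim\sqrt r\,(\log r)^{(1\pm\sqrt5)/2}$ of the equation $\phi''=\phi/s^2$, which is more direct and more informative (it identifies the sharp weight rather than recovering it interval by interval), but it leaves two things to be verified that the paper's bracketing sidesteps entirely: (i) upgrading the Liouville--Green asymptotics to actual solutions (easy here, since the error in passing from $(1+r^2\log^2 r)^{-1}$ to $(r^2\log^2 r)^{-1}$ is exponentially small in $s=\log r$), and (ii) showing that the solution satisfying the boundary condition at the left endpoint is \emph{not} proportional to the principal solution $u_-$ at infinity --- i.e.\ that there is no zero-energy resonance --- since otherwise the Wronskian in your Sturm--Liouville formula vanishes and the diagonal bound $G(r,r,0)\leq Cr\log r$ fails. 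The latter can be checked (integrating $(rf')'=rWf$ from the Neumann endpoint shows the boundary solution is increasing and grows at least like $\log r$, hence is non-principal), but it is a genuine step that your ``standard but delicate'' remark glosses over; with it supplied, your argument closes and yields the same bound as the paper's.
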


\begin{proof}
We impose addition Neumann boundary condition at $r=2$. By Neumann bracketing we have 
\begin{align*}
N\big(h_0 +\frac{1}{1+r^2\, \log ^2 r}\, -\hat V , 0\big)_{L^2(\R_+, r dr)} & \leq N\big(P_0^{0,2} +\frac{1}{1+r^2\, \log ^2 r}\, -\hat V , 0\big)_{L^2((0,2), r dr)} \\
& \quad +N\big(P_0^{2,\infty} +\frac{1}{r^2\, \log ^2 r}\,  -2\, \hat V , 0\big)_{L^2((2,\infty), r dr)}.
\end{align*}
A straightforward modification of \eqref{est-T1} gives
\begin{align}
N\big(P_0^{0,2}  +\frac{1}{1+r^2\, \log ^2 r}\, -\hat V , 0\big)_{L^2((0,2), r dr)}  & \ \leq c\, \int_0^2 \hat V(r)\, (1+\chi_{(0,1)}(r)\, |\log r|)\, r\, dr. \label{02}
\end{align}
On the interval $(2,\infty)$ we impose additional Neumann boundary conditions at $\{r=n,\, n\in\N,\, n\geq 3\}$. Hence
\begin{equation} \label{upper-sum}
N\big(P_0^{2,\infty} +\frac{1}{r^2\, \log ^2 r}\, -\hat V , 0\big)_{L^2((2,\infty), r dr)} \, \leq\, \sum_{n=2}^\infty N\big(P_0^{n,n+1} +\frac{1}{r^2\, \log ^2 r}\, -\hat V , 0\big)_{L^2((n,n+1), r dr)}.
\end{equation}
In the notation of the roof of Lemma \ref{aux-1dim}, see equation \eqref{operator-t}, we then obtain
\begin{align}
N\big(P_0^{n,n+1} +\frac{1}{r^2\, \log ^2 r}\, -\hat V , 0\big)_{L^2((n,n+1), r dr)}  & \leq N\big(T^{n,n+1}_{\delta_n}  -\hat V , 0\big)_{L^2(n,n+1)}  \, , \label{sigma_n}
\end{align}
where 
$$
\delta^2_n = \frac{1}{\log^2 (n+1)} .
$$
Hence in view of \eqref{eq:1dim} we get
$$
N\big(T^{n,n+1}_{\delta_n}  -\hat V , 0\big)_{L^2(n,n+1)} \, \leq \, L_0\, \int_n^{n+1} \delta_n^{-1}\, \hat V(r)\, r\, dr \, \leq\, \tilde c\, \int_n^{n+1} \hat V(r)\, ( \log r)\, r\, dr. 
$$
This together with \eqref{02} and \eqref{upper-sum} proves the Lemma.
\end{proof}

\begin{lemma} \label{flux-m}
Let $B$ satisfy the hypotheses of Theorem \ref{clr-radial-integer} and suppose that $\Phi=-m\in\Z$. Assume that $V$ is continuous and compactly supported. Then there exist constants $k_1$ and $k_2$ 
such that 
\smallskip
\begin{align} \label{clr-1dim-integer-1}
N\big(h_m +\frac{1}{1+r^2\, \log ^2 r}\, -\hat V , 0\big)_{L^2(\R_+, r dr)} \, & \leq \, k_1\, \big( \, \|V \|_{L^1(\R^2 )} + \|V\log |x| \|_{L^1(\R^2)} \big) \\
& \nonumber \\
 \label{clr-1dim-integer-2}
N\big(h_0 -\hat V , 0\big)_{L^2(\R_+, r dr)} \, & \leq \, k_2\, \big( \, \|V \|_{L^1(B_1)} + \|V \log |x| \|_{L^1(\R^2)} \big).
\end{align}
\end{lemma}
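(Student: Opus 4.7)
Both inequalities are proved by Neumann bracketing on the half-line, adapting the scheme of Lemma \ref{zeroflux} to the fact that in each Fourier sector the effective magnetic Hardy weight is good at only one endpoint: near the origin for $h_m$, where $(\Phi(r)+m)^2 \to m^2$ as $r\to 0$, and near infinity for $h_0$, where $\Phi(r)^2 \to m^2$ as $r\to\infty$. As in the proof of Theorem \ref{clr-radial}, one reduces to continuous compactly supported $V$ by density.

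\emph{For part 1}, I would fix $R_0 \in (0, 1)$ such that $(\Phi(r)+m)^2 \geq m^2/2$ on $(0, R_0)$ and Neumann-bracket at $r = R_0$, at $r = 2$, and at every integer $n \geq 2$. On $(0, R_0)$ Lemma \ref{aux-1dim} with $\delta = |m|/\sqrt{2}$ yields a contribution $\lesssim \int_0^{R_0}\hat V(r)\,r\,dr \lesssim \|V\|_{L^1(\R^2)}$. The compact interval $[R_0, 2]$ carries a uniform spectral gap from $(1+r^2\log^2 r)^{-1}$, so Birman--Schwinger on a bounded interval bounds that piece by $\lesssim \|V\|_{L^1(B_2)}$. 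On each outer slice $(n, n+1)$ with $n \geq 2$ the bound $(1+r^2\log^2 r)^{-1}\geq c/(r^2\log^2(n+1))$ lets Lemma \ref{aux-1dim} with $\delta_n = c^{1/2}/\log(n+1)$ produce $L_0 \log(n+1)\int_n^{n+1}\hat V\,r\,dr \leq \tilde c\int_n^{n+1}\hat V\,\log r\,r\,dr$; summing over $n$ yields $\|V\log|x|\|_{L^1(\R^2\setminus B_2)}$. Combining these contributions proves \eqref{clr-1dim-integer-1}.

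\emph{For part 2}, first apply Lemma \ref{hardy-log-lem} restricted to the $m = 0$ Fourier mode to obtain $h_0 \geq c_*\,w$ with $w = (1+r^2\log^2 r)^{-1}$, and use the multiplicative trick of \eqref{basic-1} to reduce the estimate to $N(h_0 + c_1 w - c_2\hat V, 0)$. Choose $R_1 > 1$ with $\Phi(r)^2 \geq m^2/2$ on $(R_1, \infty)$, and place Neumann brackets at $r = 1$ and $r = R_1$. On $(0, 1)$ the weight $c_1 w$ is uniformly bounded below, so the Bessel--Green's function calculation of Lemma \ref{m=0} applies verbatim and bounds the inner piece by $\lesssim \int_0^1 (1+|\log r|)\hat V\,r\,dr \leq C\bigl(\|V\|_{L^1(B_1)} + \|V\log|x|\|_{L^1(B_1)}\bigr)$. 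On $(R_1, \infty)$ the pointwise bound $\Phi(r)^2/r^2 \geq m^2/(2r^2)$ and Lemma \ref{aux-1dim} with $\delta = |m|/\sqrt{2}$ give $\lesssim \int_{R_1}^\infty\hat V\,r\,dr \leq (\log R_1)^{-1}\|V\log|x|\|_{L^1(\R^2\setminus B_{R_1})}$. The transition interval $(1, R_1)$ carries the uniform spectral gap $c_1 w + \Phi(r)^2/r^2 \geq c_3 > 0$, so Birman--Schwinger on a bounded interval contributes $\lesssim \int_1^{R_1}\hat V\,r\,dr$.

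The main technical obstacle is absorbing this transition contribution into the right-hand side of \eqref{clr-1dim-integer-2}, since $\log r$ vanishes at $r = 1$ and the obvious estimate $\int_1^{R_1}\hat V\,r\,dr \lesssim \int_1^{R_1}\hat V\log r\,r\,dr$ degenerates near $r = 1$. My planned resolution is to refine the Neumann split by adding a bracket at $r = e^{1/2}$: on $(e^{1/2}, R_1)$ one has $\log r \geq 1/2$, hence $\int_{e^{1/2}}^{R_1}\hat V\,r\,dr \leq 2\|V\log|x|\|_{L^1(\R^2)}$, while the Bessel--Green's function computation of Lemma \ref{m=0}---whose diagonal bound $G_0(r, r, 1) \leq c\,r(1+|\log r|)$ is in fact valid on all of $(0, e^{1/2})$ because the logarithmic singularity comes solely from the origin---controls the enlarged inner region. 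The subtle accounting of the thin shell $(1, e^{1/2})$, where $|\log r| \leq 1/2$, together with the passage of the Bessel estimate from $(0, 1)$ to $(0, e^{1/2})$, is the delicate step of the proof.
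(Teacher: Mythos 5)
Your overall route coincides with the paper's: the paper disposes of this lemma in one line, as ``straightforward modifications of Lemmata \ref{zeroflux} and \ref{m=0}'', and your part~1 is precisely the modification of Lemma \ref{zeroflux} (Neumann brackets at the integers, Lemma \ref{aux-1dim} with $\delta_n\sim 1/\log(n+1)$ on the outer slices, the Bessel--Green's function bound near the origin), while your part~2 is the modification of Lemma \ref{m=0}. Part~1 is correct, with one caveat: your use of Lemma \ref{aux-1dim} with $\delta=|m|/\sqrt2$ on $(0,R_0)$ is vacuous when $m=0$; in that case \eqref{clr-1dim-integer-1} \emph{is} Lemma \ref{zeroflux}, and for $m\neq0$ you could just as well drop the (nonnegative) magnetic term near the origin and reuse \eqref{02} verbatim, so the extra bracket at $R_0$ buys nothing.

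In part~2 the step you yourself flag as delicate does not close, and it cannot close as you have set it up. After your additional bracket at $r=e^{1/2}$, the Green's-function bound on $(0,e^{1/2})$ still produces the term $\int_1^{e^{1/2}}\hat V(r)\,r\,dr$, and this is exactly the quantity that the right-hand side of \eqref{clr-1dim-integer-2} as printed cannot control: on the annulus $1<|x|<e^{1/2}$ one has $\|V\|_{L^1(\B_1)}=0$ while $|\log|x||$ degenerates at the inner edge. In fact \eqref{clr-1dim-integer-2} is false as literally stated: a continuous radial $V\approx\lambda\varepsilon^{-1}\chi_{\{1<|x|<1+\varepsilon\}}$ with $\lambda$ fixed and large yields $N(h_0-\hat V,0)\geq1$ for every small $\varepsilon$ (test with a fixed $f\in C_0^\infty(\R_+)$, $f(1)=1$), while $\|V\|_{L^1(\B_1)}+\|V\log|x|\|_{L^1(\R^2)}=O(\lambda\varepsilon)\to0$. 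The bound that the modification of Lemma \ref{m=0} actually delivers --- and all that the proof of Theorem \ref{clr-radial-integer} needs, since $\|V\|_{L^1(\R^2)}\leq2\pi\,\|V\|_{L^1(\R_+,L^\infty(\Sph^1))}$ --- carries $\|V\|_{L^1(\R^2)}$ in place of $\|V\|_{L^1(\B_1)}$, in line with \eqref{clr-1dim}. Once the lemma is read this way, your transition annulus is absorbed into $\|V\|_{L^1(\R^2)}$ with no bracket at $e^{1/2}$ at all, and the remainder of your argument (sector Hardy inequality, Bessel estimate on $(0,1)$, Lemma \ref{aux-1dim} at infinity) goes through.
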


\begin{proof}
Inequalities \eqref{clr-1dim-integer-1} and \eqref{clr-1dim-integer-2} follows from straightforward modifications of Lemmata \ref{zeroflux} and \ref{m=0} respectively. 
\end{proof}

\begin{proof}[Proof of Theorem \ref{clr-radial-integer}] Assume that $\Phi=-m\in\Z$.
By inequality \eqref{hardy-log} it follows that 
\begin{equation} \label{N-hardy}
N(H_B-V,\, 0) \, \leq\, N\big(H_B  +\frac{1}{1+|x|^2\, \log ^2|x|}\ - \frac{2}{\varkappa}\, V,\, 0\big). 
\end{equation}
Let  $\Q = \id - \Pi_0-\Pi_m$ be the projection on the orthogonal complement of $\mathcal{L}_0 \oplus \mathcal{L}_m$. Mimicking the arguments of section \ref{sect-radial1} we obtain
\begin{align}
& N\big(H_B  +\frac{1}{1+|x|^2\, \log ^2|x|}\ - V,\, 0\big)   \leq  N\big(\Q (H_B  - 3V)\Q,\, 0\big) + N\big(\Pi_0(H_B  - 3V)\Pi_0,\, 0\big)\nonumber  \\
&\qquad \qquad + N\big(\Pi_m (H_B  +\frac{1}{1+|x|^2\, \log ^2|x|}\ - 3V) \Pi_m,\, 0\big)  = N\big(\Q(H_B - 3 V)\Q,\, 0\big)\nonumber \\  
& \qquad \qquad + 
N\big(h_0 - 3 \hat V , 0\big)_{L^2(\R_+, r dr)}  
 +N\big(h_m +\frac{1}{1+r^2\, \log ^2 r}\, -3  \hat V , 0\big)_{L^2(\R_+, r dr)}. \label{3-terms}
\end{align}
As in the proof of Theorem \ref{clr-radial} we note that by Lemma \ref{no-log-integer} and inequality \eqref{eq:orthog} 
$$
N\big(\Q(H_B -  V)\Q,\, 0\big) \leq N\big(H_B +\frac{\varkappa'}{|x|^2} - 2\, V,\, 0 \big) \leq c\, \|V \|_{L^1(\R_+, L^\infty(\Sph^1))}.
$$
The statement of the Theorem then follows from Lemmata \ref{zeroflux}, \ref{flux-m} and inequalities \eqref{N-hardy}, \eqref{3-terms}. 
\end{proof}

 It should be pointed out that the difference between the estimates \eqref{clr-eq-radial} and \eqref{clr-eq-integer}, in other words between the presence of the terms $\|V \log |x| \|_{L^1(\B_1)}$ and $ \|V \log |x| \|_{L^1(\R^2)}$, is a direct consequence of the decay rate of the respective Hardy weights:
$$
(H_B\, u\, , u) \, \geq \, (\rho\, u\, , u) \quad \forall\, u\in C_0^\infty(\R^2), \qquad  
\rho(x) = c\,  \left\{
\begin{array}{l@{\quad}cr}
\big(1+|x|^2 \log ^2|x| \big) ^{-1} & \text{if \,} &  \Phi\in\Z \\
&& \\
\big(1+|x|^2\big)^{-1}   & \text{if \, } & \Phi\notin\Z 
\end{array}
\right.  .
$$ 

\begin{remark}
The logarithmic factor in the case $\Phi\in\Z$  is specific to $\R^2$. For example in a waveguide-type domain $\R\times (0,1)$ the Hardy weight decays at infinity as  $|x|^{-2}$ independently of the total flux, cf.~\cite{ek}.
\end{remark}

\smallskip

%%%%%%%%%%%%%%%%%%%%%%%%%%%%%%%%%%%%%%%%%%%%%%%%%
\subsection{Proof of Proposition \ref{examples}} \label{examples:sec}

\begin{proof}[Proof of Proposition \ref{examples}] 
By \cite[Sec.6]{bl} for $\sigma >1$ we have
\begin{equation} \label{bl:eq}
\lim_{\lambda\to\infty} \lambda^{-\sigma} N(P_0-\lambda\, W_\sigma,0)_{L^2(\R+, r dr)}= \lim_{\lambda\to\infty} \lambda^{-\sigma} N(P_0-\lambda\, V_\sigma,0)_{L^2(\R+, r dr)}= \frac{4^{\sigma-1}\, \Gamma\big(\sigma-\frac 12\big)}{\sqrt{\pi}\, \, \Gamma(\sigma)}.
\end{equation}
Since $V$ is radial, the operator $H_B-V$ admit a decomposition analogous to \eqref{sum-gen}. Hence 
\begin{equation} \label{aux-sum-V}
N(H_B-\lambda\, V,\, 0)  = \sum_{m\in\Z}\, N(h_m-\lambda\, V,\, 0)_{L^2(\R_+, r dr)} \geq N(h_0-\lambda\, V,0)_{L^2(\R_+, r dr)}. 
\end{equation}
From the hypotheses on $B$ and the H\"older inequality we obtain
$$
\frac{\Phi(r)^2}{r^2} = o\big(V_\sigma(r) \big), \qquad r\to 0.
$$ 
This and a standard Dirichlet-Neumann bracketing yield 
\begin{equation} \label{dn-1}
\lim_{\lambda\to\infty} \lambda^{-\sigma} N(h_0-\lambda\,  V_\sigma,0)_{L^2(\R_+, r dr)} =  \lim_{\lambda\to\infty} \lambda^{-\sigma} N(P_0-\lambda\, V_\sigma,0)_{L^2(\R_+, r dr)}.
\end{equation}
The variational principle together with \eqref{bl:eq} and \eqref{dn-1} then imply that 
\begin{align*}
\liminf_{\lambda\to\infty} \lambda^{-\sigma} N(h_0-\lambda\, V,0)_{L^2(\R_+, r dr)} \, & \geq \,    
\lim_{\lambda\to\infty} \lambda^{-\sigma} N(P_0-\lambda\, c\, V_\sigma,0)_{L^2(\R_+, r dr)} >0.  
\end{align*}
where $c>0$ is a suitable constant. In view of  \eqref{aux-sum-V} this proves the first statement of the Proposition. 

To prove the second statement  assume that $\Phi(r)=-k\in\Z$ for all $r$ large enough.  The same reasoning as above shows that 
\begin{align*}
\liminf_{\lambda\to\infty} \lambda^{-\sigma}  N(H_B-\lambda\, V,0) &  \geq \liminf_{\lambda\to\infty} \lambda^{-\sigma}  N(h_{k}-\lambda\, c\, W_\sigma,0)_{L^2(\R_+, r dr)} \\
& =  \lim_{\lambda\to\infty} \lambda^{-\sigma} N(P_0-\lambda\, c\, W_\sigma,0)_{L^2(\R_+, r dr)} >0.
\end{align*}
\end{proof}

\begin{remark} \label{rem-singular}
From the proof of Proposition \ref{examples} it is clear that the super-linear growth of $N(H_B-\lambda\, V_\sigma)$ appears as long as the magnetic field does not have a strong singularity at the origin.  More precisely, for \eqref{noweyl:local} to fail the term $\Phi^2(r)/r^2$ would have to dominate the singularity of $V_\sigma(r)$ as $r\to 0$. This is for example the case of the Aharonov-Bohm field, when $\Phi(r)$ is constant, see Remark \ref{rem:ab}.   
\end{remark}

%%%%%%%%%%%%%%%%%%%%%%%%%%%%%%%%%%%%%%%%%%%%%%%%

\section{\bf Decay rate of Hardy weights} \label{sec:hardy-integer}

We have mentioned that the non-linear growth of $N(H_B-\lambda\, V)$ in $\lambda$ for potentials with a local singularity cannot be removed if the magnetic field is sufficiently regular.  Next we will discuss the behavior of $N(H_B-\lambda\, V)$ for slowly decaying potentials and in particular the connection between the non-linear growth of $N(H_B-\lambda\, V),\, V\in\W_\sigma$ and the decay rate of the weight function $\rho$ in the Hardy inequality 
\begin{equation} \label{hardy-abst}
H_ B \, \geq \, \rho(x)>0.
\end{equation}
Proposition \ref{examples} suggests that in order to suppress the super-linear growth of $N(H_B-\lambda\, V), \, V\in \W_\sigma$, the magnetic field should generate a Hardy inequality with a positive weight function $\rho$ dominating all the potentials from $\W_\sigma$ at infinity. 
From the definition of $\W_\sigma$ it follows that such weight function must satisfy $\rho\notin L^1(\R^2)$. 
This is the case of magnetic fields with non-integer flux, when $\rho(x) \simeq |x|^{-2}$ at infinity, see inequality \eqref{hardy-at}. However, in the case of integer flux we have

\begin{lemma} \label{hardy-general}
Assume that  $A\in L^\infty(\R^2)$ generates a bounded radial magnetic field with compact support and such that $\Phi=k\in\Z$. Suppose that 
\begin{equation} \label{hardy-L1}
\int_{\R^2} |(\nabla +i A)\, u(x) |^2\, dx \, \geq \, \int_{\R^2} |u(x)|^2\, \rho(x)\,  dx, \qquad \forall\, u\in C_0^\infty(\R^2).
\end{equation}
holds for some $0 \leq \rho\in L^\infty(\R^2), \, \rho\not\equiv 0$. Then $\rho \in L^1(\R^2)$. 
\end{lemma}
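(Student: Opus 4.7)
The plan is to test the Hardy inequality \eqref{hardy-L1} against a sequence $\{u_n\} \subset C_0^\infty(\R^2)$ of trial functions that exploit the fact that an integer total flux is gauge trivial outside the (compact) support of $B$. Since $\rho \in L^\infty(\R^2)$ is automatically integrable on every bounded set, it is enough to prove $\int_{|x| \geq 2}\rho(x)\, dx < \infty$.

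The key observation is that because $B$ is radial and compactly supported with $\Phi = k \in \Z$, there exists $R_0 > 0$ with $\Phi(r) = k$ for all $r \geq R_0$, and the phase $e^{-ik\theta}$ is globally single valued. For any $f \in C_c^\infty(\R_+)$ whose support avoids the origin, the function $u(x) := f(|x|)\, e^{-ik\theta}$ extends by zero near $0$ to a member of $C_0^\infty(\R^2)$, and the Fourier decomposition used in Section~\ref{sect-proofs-radial} applied to the mode $m = -k$ yields
\begin{equation*}
\int_{\R^2} |(\nabla + iA)\, u|^2\, dx \; = \; 2\pi \int_0^\infty \left(|f'(r)|^2 + \frac{(\Phi(r)-k)^2}{r^2}\, |f(r)|^2 \right) r\, dr.
\end{equation*}
The magnetic term vanishes identically for $r \geq R_0$ and is uniformly bounded on the fixed compact annulus $[1, R_0]$ since $\Phi - k$ is continuous there, so the whole form is controlled by the one-dimensional Dirichlet integral of $f$ plus a constant depending only on $B$.

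Next I would take the standard logarithmic cutoff $f_n \in C_c^\infty(\R_+)$ supported in $[1, n^2]$, equal to $1$ on $[2, n]$ and decaying like $\log(n^2/r)/\log n$ on $[n, n^2]$, with the transition on $[1, 2]$ fixed independently of $n$. A direct computation gives $\int_n^{n^2} r^{-1}(\log n)^{-2}\, dr = 1/\log n \to 0$, so for $u_n(x) := f_n(|x|)\, e^{-ik\theta}$ one obtains $\int_{\R^2}|(\nabla + iA) u_n|^2\, dx \leq C$ uniformly in $n$. Feeding $u_n$ into \eqref{hardy-L1} yields $\int_{\R^2} |u_n|^2\, \rho\, dx \leq C$, and since $|u_n(x)|^2 \geq \mathbf{1}_{\{2 \leq |x| \leq n\}}$, monotone convergence gives $\int_{|x|\geq 2}\rho\, dx \leq C$, which combined with the $L^\infty$ bound on $\{|x| \leq 2\}$ proves $\rho \in L^1(\R^2)$.

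The one technical nuisance, which I expect to be the least routine step, is the requirement that $f_n$ vanish on a fixed neighbourhood of the origin when $k \neq 0$, needed so that $u_n$ is smooth at $0$; one must then verify that the resulting contribution of $(\Phi(r)-k)^2/r^2$ on the annulus $[1, R_0]$ is a harmless fixed constant. This is precisely where the hypothesis $A \in L^\infty$ (equivalently, the boundedness of the compactly supported $B$) is used, and why the argument would break down for fields with a genuine singularity at the origin such as the Aharonov--Bohm field of Remark~\ref{rem:ab}.
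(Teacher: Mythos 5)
Your proof is correct and takes essentially the same route as the paper: both test \eqref{hardy-L1} on $e^{-ik\theta}$ times a logarithmic cutoff, using that $\Phi(r)\equiv k$ outside the support of $B$ removes the centrifugal term in the $m=-k$ channel at infinity, so the magnetic Dirichlet energy stays uniformly bounded while the right-hand side accumulates all of $\rho$. The only cosmetic differences are that the paper argues by contradiction with Lipschitz test functions in $H^1$ (hence a density step), whereas you work directly with smooth cutoffs and handle the region near the origin via the $L^\infty$ bound on $\rho$ rather than a second logarithmic cutoff.
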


\begin{proof}
Assume that \eqref{hardy-L1} holds for some $0\leq \rho \notin L^1(\R^2)$.  Then, by density \eqref{hardy-L1} holds for all $u\in H^1(\R^2)$. Consider the family of test functions $u_n \in H^1(\R^2)$ given by 
\begin{equation} \label{test-f}
u_n(r,\theta) = e^{- ik \theta}\, \min \big\{ \big(\log (r n)\big)_+ ,\,  1,\, \big(\log (e\, n/r)\big)_+ \big\}.
\end{equation}
A straightforward calculation shows that 
$$
\sup_{n\in\N}\, \int_{\R^2} |(\nabla +i A)\, u_n |^2 = 2\pi\, \sup_{n\in\N}\, \int_0^\infty \Big( |u_n'|^2 +\frac{(\Phi(r)-k)^2}{r^2}\, |u_n|^2\Big)\, r\, dr < \infty,
$$
while $\int_{\R^2} |u_n|^2\, \rho \to \infty$ as $n\to\infty$ since $u_n$ converges almost everywhere to $1$. This contradicts \eqref{hardy-L1}. 
\end{proof}

\begin{remark} \label{counter-ex}
The arguments of the above proof also show, using the same family of test functions \eqref{test-f}, that Sobolev inequality \eqref{eq:sobolev} fails in the absence of magnetic field, or even in the presence of a magnetic field which satisfies conditions of Lemma \ref{hardy-general}. 
\end{remark}

%\smallskip

%%%%%%%%%%%%%%%%%%%%%%%%%%%%%%%%%%%%%%%%%%%%%%%%%%%%%%%%%%%%%%%%%%%%%%%%%%%%%%%%%%%%

\section*{Acknowledgements}
The research was partially supported by the MIUR-PRINÕ08 grant for the project  ''Trasporto ottimo di massa, disuguaglianze geometriche e funzionali e applicazioni''.

%\newpage
%%%%%%%%%%%%%%%%%%%%%%%%%%%%%%%%%%%%%%%%%%%%%%%%%%%%%%%%%%%%%%%%%%%%%%%%%%%%%%%%%%

\bibliographystyle{amsalpha}

\end{document}